\newtheorem{thm}{Theorem}[section]
\newtheorem{lem}[thm]{Lemma}
 \newcommand{\thmref}[1]{Theorem~\ref{#1}}
 \newcommand{\lemref}[1]{Lemma~\ref{#1}}
\newcommand{\pt}{\partial}
\newcommand{\R}{{\mathbb R}}
\newcommand{\C}{{\mathbb C}}
\newcommand{\N}{{\mathbb N}}
\newcommand{\Lp}{\triangle}
\newcommand{\dl}{{\delta}}
\newcommand{\bee}{\begin{equation*}}
\newcommand{\eee}{\end{equation*}}
\newcommand{\be}{\begin{equation}}
\newcommand{\ee}{\end{equation}}
\newcommand{\pn}{\par\noindent}
\title{Creating materials with a desired refraction
coefficient: numerical experiments }
\author{Sapto W. Indratno and Alexander G. Ramm$^*$\\
\small Department of Mathematics\\[-0.8ex]
\small Kansas State University, Manhattan, KS 66506-2602, USA\\
\small \texttt{sapto@math.ksu.edu}\\
\small $*$Corresponding author:  \texttt{ramm@math.ksu.edu}}
\date{}
\begin{document}

\maketitle

\begin{abstract}

A recipe for creating materials with a desired refraction
coefficient is implemented numerically. The following assumptions
are used: \bee \zeta_m=h(x_m)/a^\kappa,\quad
d=O(a^{(2-\kappa)/3}),\quad M=O(1/a^{2-\kappa}),\quad
\kappa\in(0,1), \eee where $\zeta_m$ and $x_m$ are the boundary
impedance and center of the $m$-th ball, respectively, $h(x)\in
C(D)$, Im$h(x)\leq 0$, $M$ is the number of small balls embedded in
the cube $D$, $a$ is the radius of the small balls and $d$ is the
distance between the neighboring balls.

An error estimate is given for the approximate solution of the many-body
scattering problem in the case of small scatterers. This result
is used for the estimate of the minimal number of small particles
to be embedded in a given domain $D$ in order to get a material
whose refraction coefficient approximates the desired one with the
relative error not exceeding a desired small quantity.

\end{abstract}
\pn{\\ {\em MSC: 65R20, 65Z05, 74Q10}\\
{\em Key words:} many-body wave scattering problem, metamaterials,
refraction coefficient }
\section{Introduction}
A theory of wave scattering by many small bodies embedded in a
bounded domain $D$ filled with a material with known refraction
coefficient was developed in \cite{RAMM1}-\cite{RAMM4}. It was
assumed in \cite{RAMM1} that \bee d=O(a^{1/3}),\quad
M=O(a^{-1}),\quad \frac{\partial u_M}{\partial \nu}=\zeta_mu_M\quad
on \quad S_m,\quad 1\leq m\leq M, \eee where $a$ is the
characteristic size of the small particles, $d$ is the distance
between two neighboring particles, $M$ is the total number of the
embedded particles, $S_m$ is the boundary of $m$-th particle $D_m$,
$\nu$ is the unit normal to $S_m$ directed out of $D_m$, and
$\zeta_m=h_m/a$, where $h_m$, Im$h_m\leq 0$, $1\leq m\leq M$, are
constants independent of $a$.

Let us assume that $D$ is filled with a material with known
refraction coefficient $n_0^2(x)$, Im$n_0^2(x)\geq 0$, $n_0^2(x)=1$
in $D':=\R^2\setminus D$, $n_0^2(x)$ is Riemann integrable. The
governing equation is \be\label{e1} L_0u_0:=[\Lp
+k^2n_0^2]u_0=0,\quad \text{in } \R^3, \ee \be\label{e2}
u_0=\exp(ikx\cdot \alpha)+v_0 \ee where $k$ is the wave number,
$\alpha\in S^2$ is the direction of the incident plane wave, $S^2$
is the unit sphere in $\R^3$,
 and $v_0$ is the scattered
field satisfying the radiation condition \be\label{e3} \lim_{r\to
\infty}r\left(\frac{\pt v_0}{\pt r}-ikv_0\right)= 0\qquad r:=|x|\to
\infty, \ee and the limit is attained uniformly with respect to the
directions $x^0:=x/r$.

Let $n^2(x)$ be a desired refraction coefficient in $D$. We assume
that $n^2(x)$ is  Riemann integrable, Im $n^2(x)\geq 0$, $n^2(x)=1$
in $D'$. Our objective is to create materials with the refraction
coefficient $n^2(x)$ in $D$ by embedding into $D$ many small
non-intersecting balls $B_m$, $1\leq m\leq M$, of radius $a$,
centered at some points $x_m\in D$. If one embeds $M$ small
particles $B_m$ in the bounded domain $D$, then the scattering
problem consists of finding the solution to the following problem:
\be\label{e4} L_0u_M:=[\Lp +k^2n_0^2]u_M(x)=0\qquad x\in
\R^3\setminus \cup_{m=1}^M B_m, \ee \be\label{e5} \frac{\partial
u_M}{\partial\nu}=\zeta_mu_M\quad on\ S_m:=\partial B_m,\quad 1\leq
m\leq M, \ee \be\label{e6} u_M=u_0+v_M, \ee where $u_0$ solves
problem \eqref{e1}-\eqref{e3}, and $v_M$ satisfies the radiation
condition.

The following theorem is proved in \cite{RAMM4} under the
assumptions \be\label{e7} \zeta_m=h(x_m)/a^\kappa,\quad
d=O(a^{(2-\kappa)/3}),\quad M=O\left(1/a^{2-\kappa}\right),\quad
\kappa\in(0,1), \ee where $h(x)$ is a continuous function in $D$,
$Im h\leq 0$, and $\kappa\in(0,1)$ is a parameter one can choose as
one wishes. Below it is always assumed that conditions \eqref{e7}
hold.
\begin{thm}[\cite{RAMM4}]\label{thm1}
Assume that conditions \eqref{e7} are satisfied, and $D_m$ is a
ball of radius $a$ centered at a point $x_m$. Let $h(x)$ in
\eqref{e7} be an arbitrary continuous function in $D$, Im$h(x)\leq
0$, $\Delta_p\subset D$ be any subdomain of $D$, and
$\mathcal{N}(\Delta_p)$ be the number of particles in $\Delta_p$,
\be\label{e8}\mathcal{N}(\Delta_p)=\frac{1}{a^{2-\kappa}}\int_{\Delta_p}N(x)dx[1+o(1)],\quad
a\to 0, \ee
 where $N(x)\geq 0$ is a given continuous function in $D$. Then
\be\label{e9} \lim_{a\to 0}\|u_e(x)-u(x)\|_{C(D)}=0, \ee where
\be\label{e10}
u_e(x):=u_0(x)-4\pi\sum_{j=1}^MG(x,x_j)h(x_j)u_e(x_j)a^{2-\kappa}[1+o(1)],\quad
a\to 0, \ee where $\min_j|x-x_j|\geq a$, and $G(x,y)$ is the Green
function of the operator $L_0$ in $\R^3$, $G(x,y)$ satisfies the
radiation condition. The numbers $u_e(x_j)$, $1\leq j\leq M$, are
found from the linear algebraic system: \be\label{e11}
u_e(x_m)=u_0(x_m)-4\pi\sum_{j=1, j\neq
m}^MG(x_m,x_j)h(x_j)u_e(x_j)a^{2-\kappa},\ \qquad m=1,2,\hdots, M,
\ee which is uniquely solvable for all sufficiently large $M$. The
function $u(x)=\lim_{a\to 0} u_e(x)$ solves the following limiting
equation: \be\label{e12} u(x)=u_0(x)-\int_D G(x,y)p(y)u(y)dy,\ee
where $u_0$ satisfies equations \eqref{e1}-\eqref{e3},
\be\label{e13} p(x):=4\pi N(x)h(x), \ee \be\label{e14}
n^2(x):=1-k^{-2}q(x), \ee \be\label{e15} q(x):=q_0(x)+p(x),\quad
q_0(x):=k^2-k^2n_0^2(x), \ee and $n_0^2(x)$ is the coefficient in
\eqref{e1}.
\end{thm}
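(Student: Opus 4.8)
The plan is to reduce the many-body problem to a linear algebraic system by extracting, for each small ball, the leading-order asymptotics of the field it scatters, and then to recognize that system as a Riemann-sum discretization of the integral equation \eqref{e12}. I would organize the argument in three stages: (i) derive the single-body scattering coefficient and assemble the self-consistent (effective) field \eqref{e10}; (ii) obtain the linear algebraic system \eqref{e11} and establish its unique solvability for large $M$; (iii) pass to the limit $a\to0$ and prove the uniform convergence \eqref{e9}, from which the limiting equation \eqref{e12} and the definitions \eqref{e13}--\eqref{e15} follow.

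First I would analyze the field scattered by a single ball. Writing the solution as $u_M(x)=u_0(x)+\sum_m G(x,x_m)Q_m[1+o(1)]$, where $Q_m$ is the total charge of the $m$-th ball, I would use that near $B_m$ the regular part of the field -- the effective field $u_e$ produced by $u_0$ and by all the other balls -- is essentially constant and equal to $u_e(x_m)$, since $a\ll d=O(a^{(2-\kappa)/3})$. Integrating the impedance condition \eqref{e5} over $S_m$ and using $G(x,x_m)\sim(4\pi a)^{-1}$ and $\partial_\nu G(x,x_m)\sim -(4\pi a^2)^{-1}$ on $S_m$, I would balance the two leading $O(a^{-\kappa})$ contributions to obtain
\bee
Q_m=-4\pi\,h(x_m)\,a^{2-\kappa}\,u_e(x_m)[1+o(1)],\qquad a\to0,
\eee
the term coming from $\zeta_m G Q_m$ being of lower order $O(a^{1-2\kappa})$ precisely because $\kappa<1$. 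Substituting this back gives \eqref{e10}, and evaluating at $x=x_m$ while discarding the singular self-interaction term $j=m$ produces the system \eqref{e11}.

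Next I would pass to the continuum limit. Partitioning $D$ into small cells $\Delta_p$ and inserting the distribution law \eqref{e8}, the sum $4\pi\sum_j G(x,x_j)h(x_j)u_e(x_j)a^{2-\kappa}$ becomes a Riemann sum for $4\pi\int_D G(x,y)h(y)N(y)u(y)\,dy=\int_D G(x,y)p(y)u(y)\,dy$, with $p=4\pi Nh$ as in \eqref{e13}; this yields \eqref{e12}. Comparing \eqref{e12} with the Lippmann--Schwinger equation of the medium whose coefficient is $n^2$ then identifies $u$ as the field in that medium, which is the content of the definitions \eqref{e13}--\eqref{e15} with $q=q_0+p$. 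The delicate point here is the singularity of $G$: when $x$ is close to some $x_j$ the summand is large, so I would estimate the contribution of the balls lying in a small ball around $x$ separately, showing it is $o(1)$ because their number is controlled by \eqref{e8} while the separation $d\gg a$ keeps $\sum|x-x_j|^{-1}a^{2-\kappa}$ small.

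The main obstacle is the uniform stability and convergence of the discrete system \eqref{e11}: one must show it is solvable for all large $M$, with solutions $\{u_e(x_j)\}$ that are uniformly bounded and converge to the values $u(x_j)$ of the solution of \eqref{e12}. Uniqueness for \eqref{e12} follows from Fredholm theory, the integral operator being compact on $C(D)$ and the homogeneous equation having only the trivial solution under the sign conditions $\mathrm{Im}\,h\le0$ and $\mathrm{Im}\,n_0^2\ge0$. I would then treat \eqref{e11} as a perturbation of the invertible operator $I+T$, where $T$ is the integral operator of \eqref{e12}: once the collocation and Riemann-sum errors are shown to tend to zero uniformly, a standard perturbation argument yields uniform invertibility of the discrete operator and the bound $\|u_e-u\|_{C(D)}\to0$, which is \eqref{e9}.
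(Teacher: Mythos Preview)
The paper does not prove \thmref{thm1}; it is quoted from \cite{RAMM4} and used as background (the line preceding the statement reads ``The following theorem is proved in \cite{RAMM4}''). So there is no proof in this paper to compare your proposal against directly. Your outline is a faithful sketch of the argument one finds in \cite{RAMM4}: the extraction of the scattering charge $Q_m=-4\pi h(x_m)a^{2-\kappa}u_e(x_m)[1+o(1)]$ from the impedance condition, the passage to a Riemann sum using the density law \eqref{e8}, and the identification of the limiting integral equation.

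What the present paper \emph{does} prove, in Section~3, is a quantitative version of part~(iii) of your plan under the specific geometric hypothesis called Assumption~A) (equispaced balls in a cube). Your perturbation strategy --- treat the discrete system as a perturbation of the boundedly invertible operator $I+T$ of \eqref{e39} and conclude uniform invertibility and convergence --- is exactly the content of \lemref{lem31}, which shows $\|T-T_M\|\to0$ and hence $\|(I_{d,M}+T_{d,M})^{-1}\|\le c_1$ for all large $M$. The paper then goes further than your sketch: \lemref{lem32} and \lemref{lem33} sharpen the qualitative $o(1)$ into explicit rates $O(M^{-2/3}\log M)+O(|1-\gamma^3 M a_{mP}^{2-\kappa}|)$ by Taylor-expanding $u$ and $p$ to second order and exploiting the cancellation $\int_{D_j}(y-x_j)\,dy=0$ at cell centers. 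Your handling of the near-diagonal singularity of $G$ (isolating the balls near $x$ and bounding their contribution) corresponds to the paper's separate treatment of the $j=l$ term in \eqref{e68} and \eqref{e90}. In short, your stage~(iii) and the paper's Section~3 follow the same route; stages~(i)--(ii) are not addressed in this paper at all and belong to the cited reference.
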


In \cite{RAMM4} a recipe for creating material with a desired
refraction coefficient is formulated.

The goal of this paper is to implement numerically the recipe for
creating materials with a desired refraction coefficient in a given
domain $D$ by embedding in $D$ many small particles with prescribed
physical properties. These particles are balls of radius $a$,
centered at the points $x_m\in D$, and their physical properties are
described by the boundary impedances $\zeta_m=h(x_m)/a^\kappa$. A
formula for embedding the small balls in $D$ is given in Section 2.

We give an estimate for the  error in the refraction coefficient of
the medium obtained by embedding finitely many $(M<\infty)$ small
particles, compared with the refraction coefficient of the limiting
medium $(M\to \infty)$. This is important because in practice one
cannot go to the limit $M\to \infty$, i.e., $a\to 0$,  and one has
to know the maximal $a$ (i.e., minimal $M$)  such that the
corresponding to this $a$ refraction coefficient differs from the
desired refraction coefficient by not more than a given small
quantity.
 In Section 3 we give an algorithm for finding the
minimal number $M$ of the embedded small balls which generate a
material whose refraction coefficient differs from a desired one by
not more than a desired small quantity. In Section 4 some numerical
experiments are described.

\section{Embedding small balls into a cube}

In this section we give a formula for distributing small balls in a
cube in such a way that the second and third restrictions \eqref{e7}
are satisfied.

Without loss of generality let us assume that the domain $D$ is the unit
cube:
\be\label{e16} D:=[0,1]\times[0,1]\times[0,1].\ee Let \be\label{e17}
D=\cup_{q=1}^{n^3}\overline{\Delta_q}, \ n\in \N,\quad
\Delta_i\cap\Delta_j=\emptyset\quad for\quad i\neq j, \ee where $\N$
is the set of positive integers, $\overline{X}$ is the closure of
the set $X$, and $\Delta_q$, $q=1,2,\hdots,n^3$, are cubes of side
length $1/n$.

{\it Definition:} {\it We say that $D$ has property
$Q_n$ if each small cube $\Delta_q$ contains a ball of radius $a_n$,
$0<a_n<1/n$, centered at the centroid of the cube $\Delta_q$, and
the following condition holds \be\label{e18} d_n:=\min_{q\neq j}
\text{dist}(B_{a_n}(x_q),B_{a_n}(x_j))=\gamma a_n^{(2-\kappa)/3},
\ee where $x_q$ is the centroid of the cube $\Delta_q$,
$q=1,2,\hdots,n^3$, \be\label{e19} B_{a}(x):=\{y\in \R^3\ |\
|y-x|<a\}, \ee and $\gamma>0$ is a constant which is not too small
(see formula \eqref{e25}).}

From \eqref{e17} and \eqref{e18} one gets
\be\begin{split}\label{e20} d_n&=l_n-2a_n=\gamma
a_n^{(2-\kappa)/3},\quad l_n:=1/n.\end{split}\ee Since $l_n=1/n$,
the quantity $a_n$ solves the equation \be\label{e21} \gamma
a^{(2-\kappa)/3}+2a-1/n=0. \ee The function $f(a):=\gamma
a^{(2-\kappa)/3}+2a$ is strictly growing on $[0,\infty)$. Thus,  the
solution to equation \eqref{e21} exists, is unique, and can be
calculated numerically, for example, by the bisection method.

However, it is easy to derive an analytic asymptotic formula for
$a_n$ as $n\to \infty$. This formula is simple and can be used for
all $n$ we are interested in, since these $n$ are sufficiently large.

Let us derive this asymptotic formula. Since
$1/3<(2-\kappa)/3<2/3$, one has $a\ll a^{(2-\kappa)/3}$ if $a\ll 1$.
Therefore, \be\label{e22}
a_n=[1/(n\gamma)]^{3/(2-\kappa)}[1+o(1)],\quad as\ n\to \infty, \ee
is the desired asymptotic formula for the solution to \eqref{e21}. Note
that
\be\label{e23} \lim_{n\to\infty}a_n=0\quad and \quad
\lim_{n\to\infty}na_n=0,\ee
as follows from \eqref{e22} because $3/(2-\kappa)>1$.

 We note that $a_n/d_n\ll 1,$ if $ n>>1$, because \eqref{e20}
yields
\be\label{e24} a_n/d_n=a_n/(\gamma a_n^{(2-\kappa)/3})=
a_n^{(1+\kappa)/3}/\gamma\ll 1.
\ee
Let us
choose $n$ sufficiently large so that \be\label{e25} \gamma\gg
(l_n/2)^{(1+\kappa)/3},\qquad \kappa\in(0,1),\qquad l_n=1/n,\ee and
make the following
assumption:\\
\textit{Assumption A): $D$ has property $Q_{mP}$. Here
$D=\cup_{q=1}^{P^3}\overline{\Omega_q}$,
$\Omega_j\cap\Omega_i=\emptyset$ for $j\neq i$, where each cube
$\Omega_q$ has side length $1/P$, and $m^3$ small balls are embedded
in $\Omega_q$ so that the following two conditions hold:
\begin{enumerate}
\item[1.] Each cube $\Omega_q$ is a union of small
sub-cubes $\Delta_{j,q}$: \be\label{e26}
\Omega_q=\cup_{j=1}^{m^3}\Delta_{j,q},\quad
\Delta_{i,q}\cap\Delta_{j,q}=\emptyset\quad for\quad i\neq j,
\ee where $\Delta_{j,q}$, $j=1,2,\hdots,m^3$,
$q=1,2,\hdots,P^3$, are cubes of side length $1/(mP)$,
\item[2.] In each sub-cube $\Delta_{j,q}$ there is a ball of radius
$a_{mP}$, $0<a_{mP}<1/(mP)$, centered at the centroid of the
sub-cube $\Delta_{j,q}$, and  the radius $a_{mP}$ of the
embedded balls satisfies the relation \be\label{e27}
1/(mP)-2a_{mP}=\gamma a_{mP}^{(2-\kappa)/3},\qquad \gamma\gg
[1/(2mP)]^{(\kappa+1)/3},
 \ee
\end{enumerate}
where $\gamma>0$ is a fixed constant.}
\begin{lem}\label{lem21}If Assumption A) holds, then \be\label{e28}
\lim_{m\to \infty}M a_{mP}^{2-\kappa}=1/\gamma^3, \ee
where $M=(mP)^3$ is the total number of small balls embedded in the
unit cube $D$, $\gamma>0$ is fixed, and
\be\label{e29}
a_{mP}=[1/(\gamma mP)]^{3/(2-\kappa)}[1+o(1)]\quad as\ m\to \infty.
\ee
\end{lem}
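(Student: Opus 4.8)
The plan is to establish the asymptotic formula \eqref{e29} first, and then use it to evaluate the limit \eqref{e28} directly by substitution.

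First I would derive \eqref{e29}. By Assumption A), condition \eqref{e27} says that $a_{mP}$ solves the equation $\gamma a^{(2-\kappa)/3}+2a-1/(mP)=0$, which is exactly equation \eqref{e21} with $n$ replaced by $mP$. Since the function $f(a)=\gamma a^{(2-\kappa)/3}+2a$ is strictly increasing on $[0,\infty)$, this equation has a unique positive solution $a_{mP}$. As $m\to\infty$ the right-hand side $1/(mP)\to 0$, so $a_{mP}\to 0$, and by the argument already given in the excerpt (using $1/3<(2-\kappa)/3<2/3$, so that the linear term $2a$ is negligible compared to $\gamma a^{(2-\kappa)/3}$ when $a\ll 1$) the dominant balance is $\gamma a_{mP}^{(2-\kappa)/3}\sim 1/(mP)$. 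Solving this gives $a_{mP}=[1/(\gamma mP)]^{3/(2-\kappa)}[1+o(1)]$, which is \eqref{e29}. This is really just \eqref{e22} specialized to $n=mP$ with $m\to\infty$ (so $n\to\infty$), so it can be quoted with minimal extra work.

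Next I would compute the limit in \eqref{e28} by substituting \eqref{e29}. Writing $M=(mP)^3$ and raising \eqref{e29} to the power $2-\kappa$ gives
\be\label{e-plan1}
a_{mP}^{2-\kappa}=[1/(\gamma mP)]^{3}[1+o(1)]=\frac{1}{\gamma^3(mP)^3}[1+o(1)],\quad m\to\infty.
\ee
Multiplying by $M=(mP)^3$ then yields
\be\label{e-plan2}
Ma_{mP}^{2-\kappa}=(mP)^3\cdot\frac{1}{\gamma^3(mP)^3}[1+o(1)]=\frac{1}{\gamma^3}[1+o(1)],\quad m\to\infty.
\ee
Letting $m\to\infty$ makes the $[1+o(1)]$ factor tend to $1$, giving $\lim_{m\to\infty}Ma_{mP}^{2-\kappa}=1/\gamma^3$, which is \eqref{e28}.

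The only genuinely delicate point is controlling the $o(1)$ error in the asymptotic formula \eqref{e29} carefully enough that it survives multiplication by the large factor $M=(mP)^3$. Naively, an $o(1)$ relative error in $a_{mP}$ could in principle be amplified, so I would verify the error term more precisely rather than treating it cavalierly. The clean way is to avoid powering up an additive $o(1)$: from \eqref{e27} one has $\gamma a_{mP}^{(2-\kappa)/3}=1/(mP)-2a_{mP}$, hence
\be\label{e-plan3}
\gamma^3 a_{mP}^{2-\kappa}=\left(\frac{1}{mP}-2a_{mP}\right)^3,
\ee
an \emph{exact} identity. Multiplying by $M=(mP)^3$ gives $\gamma^3 Ma_{mP}^{2-\kappa}=(1-2mP\,a_{mP})^3$, and since $\lim_{m\to\infty}mP\,a_{mP}=0$ by \eqref{e23} (applied with $n=mP$, which follows from \eqref{e29} because $3/(2-\kappa)>1$), the right-hand side tends to $1$. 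This establishes \eqref{e28} rigorously without any amplification worry, and I would present the proof in this form, using \eqref{e29} only to justify the decay $mP\,a_{mP}\to 0$.
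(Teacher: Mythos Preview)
Your proof is correct and follows essentially the same route as the paper: the paper also records \eqref{e29} as an immediate consequence of \eqref{e27} (i.e., \eqref{e22} with $n=mP$) and then substitutes it directly into $Ma_{mP}^{2-\kappa}=(mP)^3a_{mP}^{2-\kappa}$ to read off the limit $1/\gamma^3$, exactly as in your \eqref{e-plan1}--\eqref{e-plan2}. Your final step---cubing the exact relation \eqref{e27} to obtain $\gamma^3Ma_{mP}^{2-\kappa}=(1-2mP\,a_{mP})^3$ and then invoking $mP\,a_{mP}\to 0$---is a tidy refinement the paper does not bother with; it removes any doubt about manipulating the $[1+o(1)]$ factor, though in fact no amplification occurs here since the $o(1)$ is a \emph{relative} error and the large factor $(mP)^3$ cancels exactly against $[1/(\gamma mP)]^3$.
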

\begin{proof} Relation \eqref{e29} is an immediate consequence of
\eqref{e27}. Using this relation, one obtains
\be\begin{split}\label{e30} \lim_{m\to
\infty}Ma_{mP}^{2-\kappa}&=\lim_{m\to
\infty}(mP)^{3}a_{mP}^{2-\kappa}\\
&=\lim_{m\to \infty}\left(mP\right)^3[1/(\gamma
mP)]^{3}[1+o(1)]\\
&=\lim_{m\to \infty}(1/\gamma^3)[1+o(1)]=1/\gamma^3.\end{split}\ee
\lemref{lem21} is proved.
\end{proof}

\section{A recipe for creating materials with a desired refraction coefficient}
In this section the recipe given in \cite{RAMM4} is used for creating
materials with a desired refraction coefficient
by embedding into $D$ small balls so that Assumption A) holds.

{\it Step 1.} Given the refraction coefficient $n_0^2(x)$ of the
original material in $D$ and the desired refraction coefficient
$n^2(x)$ in $D$, one calculates \be\label{e31}
p(x)=k^2[n_0^2(x)-n^2(x)]=p_1(x)+ip_2(x), \ee where \bee p_1:=\text{
Re }p(x) \quad \text{\,\, and\,\,\, }  p_2(x):=\text{ Im }p(x).\eee

Choose \be\label{e32} N(x)=1/\gamma^3,\ee where $\gamma$ is
the constant $\gamma$ in Assumption A).

{\it Step 2.} Choose \be\label{e33} h(x)=h_1(x)+ih_2(x),\ee where
the functions $h_1(x)$ and $h_2(x)$ are defined by the formulas:
\be\label{e34} h_i(x)=\gamma^3 p_i(x)/(4\pi),\qquad i=1,2, \ee
and the functions $p_i(x)$ are defined in {\it Step 1.}

{\it Step 3.} Partition  $D$ into $P$ small cubes $\Omega_p$ with
side length $1/P$, and embed $m^3$ small balls in each cube
$\Omega_p$ so that Assumption A) holds.

Then \be\label{e35}
\mathcal{N}(\Omega_p)=\frac{1}{a_{mP}^{2-\kappa}}\int_{\Omega_p}N(x)dx=
|\Omega_p|/(\gamma^3 a_{mP}^{2-\kappa})=1/[\gamma
Pa_{mP}^{(2-\kappa)/3}]^3, \ee
where $\mathcal{N}(\Delta_p)$ is the
number of the balls embedded in the cube $\Omega_p$, $\kappa\in(0,1)$,
$a_{mP}$ is the radius of the embedded balls, and $|\Omega_p|$ is the
volume of the cube $\Omega_p$.

Since \bee a_{mP}=[1/(m\gamma P)]^{\frac{3}{2-\kappa}}[1+o(1)]
\qquad as \qquad m\to \infty,\eee it follows that \be\label{e36}
\lim_{m\to \infty}\frac{\mathcal{N}(\Omega_p)}{m^3}=1. \ee

By Assumption A) the balls are situated at the distances $\gamma
a_{mP}^{\frac{2-\kappa}{3}}, $ $\gamma>\left[1/(mP)\right]^
{\frac{1+\kappa}{3}}.$ Therefore, all the assumptions, made in
\thmref{thm1}, hold. Thus, \be\label{e37} \max_{x\in
D}|u_e(x)-u(x)|\to 0\quad \text{as } M\to \infty, \ee where $u_e(x)$
is defined in \eqref{e10} and $u(x)$ solves \eqref{e12}. Let us
assume for simplicity that $n_0^2(x)=1$, so that
$$G(x,y)=g(x,y):=\exp(ik|x-y|)/(4\pi |x-y|).$$
Then \be\label{e38}
u_e(x)=u_0(x)-4\pi\sum_{j=1}^{M}g(x,x_j)h(x_j)u_e(x_j)a_{mP}^{2-\kappa},
\quad |x-x_j|>a_{mP}, \ M:=(mP)^3,\ee and  the limiting function
\bee u(x)=\lim_{M\to \infty}u_e(x)\eee solves the integral equation
\be\label{e39} u(x)+Tu(x)=u_0(x), \ee where \be\label{e40}
Tu(x):=\int_Dg(x,y)p(y)u(y)dy, \ee \be\label{e41}
g(x,y):=\exp(ik|x-y|)/(4\pi |x-y|), \ee $M:=(mP)^3$, $h(x)=h_1(x)+i
h_2(x)$, $h_i(x),i=1,2,$ are defined in \eqref{e34},
\be\begin{split}\label{e42}
p(x)&=k^2[n_0^2(x)-n^2(x)]=4\pi
[h_1(x)+ih_2(x)]N(x)=4\pi [h_1(x)+ih_2(x)]/\gamma^3,\end{split}\ee
and the function $u_0(x)$ in  \eqref{e38} solves the scattering
problem \eqref{e1}-\eqref{e3}.

It follows from \eqref{e37} that
\be\label{e43}\max_{1\leq l\leq
M}|u(x_l)-u_e(x_l)|\to 0\text{ as }M\to \infty,\ee where $u_e(x)$
and $u(x)$ are defined in \eqref{e38} and \eqref{e39}, respectively.
Here and throughout this paper $D:=\cup_{j=1}^{M} D_j$, $M:=(mP)^3$,
$D_j$  ($j=1,2,\hdots,M$) are cubes with the side length $1/(mP)$,
$D_j\cap D_l=\emptyset$ for $j\neq l$, and $x_j$ denotes the center
of the cube $D_j$. However, since \eqref{e37} was not proved here,
let us prove relation \eqref{e43}. We denote
$$\|u\|_\infty:=\sup_{x\in D}|u(x)|\quad \text{and } \|v\|_{\C^M}:=\max_{1\leq
j\leq M}|v_j|,\ v:=\left(
                     \begin{array}{c}
                       v_1 \\
                       v_2 \\
                       \vdots \\
                       v_M \\
                     \end{array}
                   \right)\in \C^M,$$ where $\C$ is the
set of complex numbers.

Consider the following piecewise-constant function as an approximate
solution to equation \eqref{e39}: \be\label{e44}
u_{(M)}(x):=\sum_{j=1}^M \chi_j(x)u_{j,M}, \ee where $u_{j,M}$
$(j=1,2,\hdots,M)$ are constants and \be\label{e45}
\chi_j(x):=\left\{\begin{array}{ll}
                                                             1, & \hbox{$x\in D_j$,} \\
                                                             0, & \hbox{otherwise.}
                                                           \end{array}
                                                         \right.\ee
Substituting $u_{(M)}(x)$ for $u(x)$ in \eqref{e39} and evaluating
at points $x_l$, one gets the following linear algebraic system
(LAS) which is used to find the unknown $u_{j,M}$: \be\label{e46}
\tilde{u}_{(M)}+T_{d,M}\tilde{u}_{(M)}=u_{0,M}, \ee
where $T_{d,M}$ is a discrete version of $T_M$, defined below,
\be\label{e47} \tilde{u}_{(M)}:=\left(
                                  \begin{array}{c}
                                    u_{1,M} \\
                                    u_{2,M} \\
                                    \vdots \\
                                    u_{M,M} \\
                                  \end{array}
                                \right)\in \C^M,\quad u_{0,M}:=\left(
                                  \begin{array}{c}
                                    u_0(x_1) \\
                                    u_0(x_2) \\
                                    \vdots \\
                                    u_0(x_M) \\
                                  \end{array}
                                \right)\in \C^M,\ee
$u_0(x)$ solves problem (1)-(3), and \be\label{e48}
(T_{d,M}v)_l:=\sum_{j=1}^{M}\int_{D_j} g(x_l,y)p(y)dy v_j,\quad
l=1,2,\hdots,M,\ v:=\left(
                                  \begin{array}{c}
                                    v_1 \\
                                    v_2 \\
                                    \vdots \\
                                    v_M \\
                                  \end{array}
                                \right)\in \C^M. \ee Multiplying the
                                $l$-th equation of \eqref{e46} by $\chi_l(x)$, $l=1,2,\hdots,M,$ and summing up over $l$ from $1$ to $M$,
one gets
 \be\label{e49}
u_{(M)}(x)=u_{0,(M)}(x)-T_Mu_{(M)}(x), \ee
where $u_{(M)}(x)$ is
defined in \eqref{e44}, \be\label{e50}
u_{0,(M)}(x):=\sum_{j=1}^M\chi_j(x)u_0(x_j), \ee
and
\be\label{e51}
T_M u(x):=\sum_{j=1}^{M}\chi_j(x)\int_D g(x_j,y)p(y)u(y)dy,\quad
M=(mP)^3. \ee  It was proved in \cite{RAMM563} that equation
\eqref{e49} is equivalent to \eqref{e46} in the sense that
$\{u_{j,M}\}_{j=1}^M$ solves \eqref{e46} if and only if function
\eqref{e44} solves \eqref{e49}.

\begin{lem}\label{lem31}
For all sufficiently large $M$ equation \eqref{e46} has
a unique solution, and there exists a constant $c_1>0$ such that
\be\label{e52} \|(I_{d,M}+T_{d,M})^{-1}\|\leq c_1,\quad \forall
M>M_0, \ee
where $M_0>0$ is a sufficiently large number.
\end{lem}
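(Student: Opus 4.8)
The plan is to derive the uniform bound \eqref{e52} by comparing the discrete operator $I_{d,M}+T_{d,M}$ with the integral operator $I+T$ of the limiting equation \eqref{e39}, exploiting that $I+T$ is invertible. First I would record that $T$, defined in \eqref{e40}, is a weakly singular integral operator: its kernel $g(x,y)p(y)$ has the integrable singularity $|x-y|^{-1}$ in $\R^3$ and $p$ is bounded, so $T$ is compact in $C(D)$. Since the homogeneous limiting equation $u+Tu=0$ has only the trivial solution (uniqueness for the limiting scattering problem underlying \thmref{thm1}), the Fredholm alternative makes $I+T$ boundedly invertible; set $c_0:=\|(I+T)^{-1}\|_{C(D)\to C(D)}$.

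Next I would identify $T_M$ from \eqref{e51} with $P_MT$, where $P_M$ samples at the cube centers and extends as a piecewise-constant function, $(P_Mf)(x)=f(x_j)$ for $x\in D_j$. Indeed, for $x\in D_l$ one has $(T_Mu)(x)=\int_Dg(x_l,y)p(y)u(y)\,dy=(Tu)(x_l)$, so $T_Mu=P_M(Tu)$ and hence $T-T_M=(I-P_M)T$. Compactness of $T$ makes $\{Tu:\|u\|_\infty\le1\}$ precompact, hence equicontinuous, and for piecewise-constant sampling with mesh radius $\rho=\tfrac{\sqrt3}{2mP}\to0$ one has $\|(I-P_M)f\|_\infty\le\omega(f,\rho)$, where $\omega$ is the modulus of continuity; taking the supremum over the equicontinuous family gives $\|T-T_M\|_{C(D)\to C(D)}\to0$ as $M\to\infty$. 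The same conclusion, with the explicit rate $O(1/(mP))$, follows from the kernel estimate $\int_D|g(x,y)-g(x',y)|\,dy=O(|x-x'|)$, obtained by splitting the integral into the ball $|x-y|<2\rho$, where each Green kernel contributes $O(\rho^2)$, and its complement, where $|\nabla_xg|\lesssim|x-y|^{-2}$ yields $O(\rho)$. This norm convergence, and in particular the control of the weak singularity of $g$ that it requires, is the main obstacle; everything else is routine once it is available.

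With $\|T-T_M\|\to0$ in hand, a standard perturbation argument finishes the continuous part. Choose $M_0$ so large that $\|T-T_M\|\le 1/(2c_0)$ for all $M>M_0$; then $I+T_M=(I+T)\bigl[I+(I+T)^{-1}(T_M-T)\bigr]$ is invertible by the Neumann series, and $\|(I+T_M)^{-1}\|\le c_0/\bigl(1-c_0\|T-T_M\|\bigr)\le 2c_0$ for all $M>M_0$.

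Finally I would transfer this bound from $C(D)$ to $\C^M$. By the equivalence of \eqref{e46} and \eqref{e49} from \cite{RAMM563}, and because the space of piecewise-constant functions $\sum_j\chi_ju_{j,M}$ is mapped isometrically onto $(\C^M,\|\cdot\|_{\C^M})$ (the cubes $D_j$ being disjoint and covering $D$, so $\|u_{(M)}\|_\infty=\max_j|u_{j,M}|$), the operator $I_{d,M}+T_{d,M}$ on $\C^M$ corresponds isometrically to the restriction of $I+T_M$ to this finite-dimensional subspace. That subspace is invariant under $(I+T_M)^{-1}$: if $(I+T_M)w=v$ with $v$ piecewise constant, then $w=v-T_Mw$ is piecewise constant, since $T_M$ takes values in piecewise-constant functions. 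Hence the restriction of $I+T_M$ is invertible, \eqref{e46} has a unique solution for $M>M_0$, and $\|(I_{d,M}+T_{d,M})^{-1}\|\le\|(I+T_M)^{-1}\|_{C(D)\to C(D)}\le 2c_0=:c_1$, which is exactly \eqref{e52}.
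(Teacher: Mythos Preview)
Your argument is correct and follows essentially the same route as the paper: establish $\|T-T_M\|\to 0$, invoke the bounded invertibility of $I+T$ and a Neumann-series perturbation to bound $\|(I+T_M)^{-1}\|$ uniformly, and then pass to the discrete system via the equivalence of \eqref{e46} and \eqref{e49}. Your final step is somewhat more explicit than the paper's: where the paper appeals to the Fredholm alternative on $\C^M$ and then states that the bound on $(I+T_M)^{-1}$ ``yields'' the bound on $(I_{d,M}+T_{d,M})^{-1}$, you spell out that the piecewise-constant subspace is invariant under $(I+T_M)^{-1}$ and isometric to $(\C^M,\|\cdot\|_{\C^M})$, which makes the transfer of the norm bound transparent.
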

\begin{proof}
Consider the operators $T$ and $T_M$ as operators in the space
$L^\infty(D)$ with the $\sup$-norm. Let
$\|(T-T_M)u\|_\infty:=\sup_{x\in D}|(T-T_M)u(x)|$. Then
\be\begin{split}\label{e53} \|(T-T_M)u\|_\infty&\leq
\max_i\sup_{x\in
D_i}\sum_{j=1}^M\int_{D_j}\left|(g(x,y)-g(x_i,y))p(y)u(y)\right|dy\\
&\leq \|u\|_\infty\|p\|_\infty\max_i\sup_{|x-x_j|\leq
\frac{1}{mP}}\sum_{j=1}^M\int_{D_j}|g(x,y)-g(x_i,y)|dy\\
&\leq O(1/(mP)).
\end{split}\ee
This implies \be\label{e54} \|T-T_M\|=O(1/(mP))=O(1/M^{1/3})\to 0
\text{ as } M\to \infty. \ee The operator $I+T$ is known to be
boundedly invertible, so $\|(I+T)^{-1}\|<c$, where $c>0$ is a
constant. Therefore,
 \be\label{e55}
I+T_M=(I+T)[I+(I+T)^{-1}(T_M-T)].
 \ee By \eqref{e54} there exists $M_0$ such that
 \be\label{e56}
\|(I+T)^{-1}(T_M-T)\|\leq c\|T_M-T\|<\delta<1,\quad \forall M>M_0,
 \ee where $\dl>0$ is a constant.
From \eqref{e56} we obtain, $\forall M>M_0$,\be\label{e57}
\|[I+(I+T)^{-1}(T_M-T)]^{-1}\|\leq
\frac{1}{1-\|(I+T)^{-1}(T_M-T)\|}\leq 1/(1-\dl). \ee
 Therefore, it follows from
 \eqref{e55} that $I+T_M$ is boundedly invertible and
 \be\label{e58}
(I+T_M)^{-1}=[I+(I+T)^{-1}(T_M-T)]^{-1}(I+T)^{-1},
 \ee so there exists a constant $c_0>0$ such that \be\label{e59}
\|(I+T_M)^{-1}\|\leq c_0 , \quad \forall M>M_0.\ee Since \eqref{e49}
is equivalent to \eqref{e46}, it follows that the homogeneous
equation $v+T_{d,M}v=0$ has only trivial solution for $M>M_0$, i.e.,
$\mathcal{N}(I_{d,M}+T_{d,M})=\{0\}$ for $M>M_0$, where
$\mathcal{N}(A)$ is the nullspace of the operator $A$, $I_{d,M}$ is
the identity operator in $\C^M$ and $T_{d,M}$ is defined in
\eqref{e48}. Therefore, by the Fredholm alternative equation
\eqref{e46} is solvable for $M>M_0$. This together with \eqref{e59}
yield the existence of a constant $c_1>0$ such that
$\|(I_{d,M}+T_{d,M})^{-1}\|\leq c_1$ for $M>M_0$.\\
\lemref{lem31} is proved.
\end{proof}

Define $T_d:C^2(D)\to \C^M$ as follows \be\label{e60}
(T_dw)_l:=(Tw)(x_l)=\sum_{j=1}^M\int_{D_j}g(x_l,y)p(y)w(y)dy,\quad
l=1,2,\hdots,M, \ee and \be\label{e61}u_M:=\left(
                                  \begin{array}{c}
                                    u(x_1) \\
                                    u(x_2) \\
                                    \vdots \\
                                    u(x_M) \\
                                  \end{array}
                                \right)\in \C^M,\ee where
$T$ is defined in \eqref{e40} and $u(x)$ solves \eqref{e39}. Then it
follows from \eqref{e39}, \eqref{e60} and \eqref{e61} that the
following equation holds \be\label{e62}u_M+T_du= u_{0,M},\ee where
$u_{0,M}$ is defined in \eqref{e47}. Using equations \eqref{e46} and
\eqref{e62}, we derive the following equality:
\be\begin{split}\label{e63}
(I_{d,M}+T_{d,M})(\tilde{u}_{(M)}-u_M)&=(I_{d,M}+T_{d,M})\tilde{u}_{(M)}-(I_{d,M}+T_{d,M})u_M\\
&=u_{0,M}-(I_{d,M}u_M+T_{d,M}u_M)\\
&=u_{0,M}-u_M-T_{d,M}u_M=T_du-T_{d,M}u_M,
\end{split}\ee where $\tilde{u}_{(M)}$ and $u_{0,M}$ are defined in \eqref{e47},
\be\label{e64}I_{d,M}v=v,\quad \forall v=\left(
                                           \begin{array}{c}
                                             v_1 \\
                                             v_2 \\
                                             \vdots \\
                                             v_M \\
                                           \end{array}
                                         \right)\in \C^M.\ee
Using relation \eqref{e63}, one gets \be\label{e65}
\tilde{u}_{(M)}-u_M=(I_{d,M}+T_{d,M})^{-1}(T_du-T_{d,M}u_M). \ee

\begin{lem}\label{lem32} Let Assumption A) hold (see Section 2 below
\eqref{e25}).
Suppose $u(x)$ solves \eqref{e39} where $p(x)\in C^1$. Then \be\label{e66}
\|\tilde{u}_{(M)}-u_M\|_{\C^M}=O(1/M^{2/3}) \text{ as } M\to
\infty,\ee where $u_M$ and $\tilde{u}_{(M)}$ are defined in
\eqref{e61} and \eqref{e47}, respectively.
\end{lem}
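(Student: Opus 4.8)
The plan is to derive everything from the exact identity \eqref{e65}. Taking the $\C^M$-norm there and using \lemref{lem31} (which provides $\|(I_{d,M}+T_{d,M})^{-1}\|\le c_1$ for $M>M_0$) reduces the claim to showing $\|T_du-T_{d,M}u_M\|_{\C^M}=O(M^{-2/3})$. Writing the $l$-th component from \eqref{e60} and \eqref{e48}, and using that $u(x_j)$ is constant on $D_j$, the two discrete operators collapse into a single sum of consistency integrals,
\be\label{eplanA}
(T_du-T_{d,M}u_M)_l=\sum_{j=1}^M\int_{D_j}g(x_l,y)\,p(y)\,[u(y)-u(x_j)]\,dy ,\qquad l=1,\dots,M,
\ee
so the whole estimate is driven by the oscillation of $u$ over a cube $D_j$ of side $1/(mP)=M^{-1/3}$. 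First I would record the regularity I need: since $u$ solves \eqref{e39} with $u_0$ smooth and $p\in C^1$, and the volume potential $T$ in \eqref{e40} gains two derivatives off a H\"older density, a bootstrap yields $u\in C^2(D)$, hence $\|u\|_{C^2(D)}<\infty$.

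Next I would split the sum \eqref{eplanA} into the singular cube $j=l$ (whose centroid is $x_l$) and the regular cubes $j\ne l$. For $j=l$ the kernel is singular but integrable: bounding $|u(y)-u(x_l)|\le\|u\|_{C^2(D)}|y-x_l|$ against $|g(x_l,y)|\le(4\pi|y-x_l|)^{-1}$ cancels the singularity and gives a contribution $O(|D_l|)=O(1/M)=o(M^{-2/3})$. On each regular cube I would Taylor expand $u(y)-u(x_j)=\nabla u(x_j)\cdot(y-x_j)+R_j(y)$ with $|R_j(y)|\le C\|u\|_{C^2(D)}|y-x_j|^2\le C\,M^{-2/3}$. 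The remainder part, summed over $j\ne l$, is bounded by $C\,M^{-2/3}\int_D|g(x_l,y)||p(y)|\,dy=O(M^{-2/3})$, using the uniform boundedness of the volume potential of $|p|$, $\sup_x\int_D|g(x,y)||p(y)|\,dy<\infty$.

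The decisive term is the linear one, and here the centroid symmetry $\int_{D_j}(y-x_j)\,dy=0$ is essential. Setting $F_j(y):=g(x_l,y)p(y)$, which is smooth on $D_j$ because $x_l\notin D_j$, and expanding $F_j(y)=F_j(x_j)+O\big(|y-x_j|\sup_{D_j}|\nabla F_j|\big)$, the constant term integrates to zero against $(y-x_j)$ and one is left with
\be\label{eplanB}
\Big|\int_{D_j}F_j(y)\,\nabla u(x_j)\cdot(y-x_j)\,dy\Big|\le\|u\|_{C^2(D)}\,\sup_{D_j}|\nabla F_j|\int_{D_j}|y-x_j|^2\,dy\lesssim \frac{M^{-5/3}}{r_{lj}^2},
\ee
where $r_{lj}:=|x_l-x_j|$, and I used $\sup_{D_j}|\nabla_y g(x_l,y)|\lesssim r_{lj}^{-2}$ (valid since $\text{dist}(x_l,D_j)\ge c\,r_{lj}$ with $c>0$ uniform over the regular cubes) together with $\int_{D_j}|y-x_j|^2\,dy\lesssim M^{-5/3}$. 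Summing \eqref{eplanB} over $j\ne l$ reduces the linear term to the lattice sum $\sum_{j\ne l}r_{lj}^{-2}$.

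The step I expect to be the main obstacle is this lattice sum. The centers $x_j$ form a cubic grid of spacing $h=M^{-1/3}$, so $r_{lj}=h|n_{lj}|$ for offsets $n_{lj}\in\Z^3\setminus\{0\}$ with $|n_{lj}|\lesssim M^{1/3}$, whence
\be\label{eplanC}
\sum_{j\ne l}\frac{1}{r_{lj}^2}=\frac{1}{h^2}\sum_{0<|n|\lesssim M^{1/3}}\frac{1}{|n|^2}\lesssim M^{2/3}\cdot M^{1/3}=M,
\ee
uniformly in $l$, because the number of lattice points in a shell of radius $\rho$ is $O(\rho^2)$, so $\sum_{0<|n|\lesssim M^{1/3}}|n|^{-2}=O(M^{1/3})$. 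Combining \eqref{eplanB} and \eqref{eplanC}, the linear term is $O(M^{-5/3}\cdot M)=O(M^{-2/3})$. Adding the three contributions ($O(1/M)$ from the singular cube and $O(M^{-2/3})$ from the remainder and the linear term) yields $\max_l|(T_du-T_{d,M}u_M)_l|=O(M^{-2/3})$, which together with the reduction in the first paragraph proves \eqref{e66}. The exponent $2/3$ is sharp in this scheme: without the centroid cancellation the linear term would only be $O(M^{-1/3})$, and it is precisely the borderline growth $\sum_{j\ne l}r_{lj}^{-2}\sim M$ that matches the symmetry-improved linear term to the same order $M^{-2/3}$ as the quadratic remainder.
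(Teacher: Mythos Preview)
Your proof is correct and follows essentially the same route as the paper's: both reduce via \eqref{e65} and \lemref{lem31}, split off the singular cube $j=l$, Taylor-expand $u$ on the regular cubes, and exploit the centroid cancellation $\int_{D_j}(y-x_j)\,dy=0$ to upgrade the linear term from $O(M^{-1/3})$ to $O(M^{-2/3})$. The only cosmetic difference is in the final bookkeeping of the linear term: the paper bounds $\sup_{t\in[0,1]}|\nabla_y g(x_l,ty+(1-t)x_j)|$ pointwise in $y$ (using $|x_l-y|\le 2|x_l-s|$) and sums the cube integrals back into the finite quantity $\int_D|x_l-y|^{-2}\,dy$, whereas you pull out $r_{lj}^{-2}$ and control the discrete lattice sum $\sum_{j\ne l}r_{lj}^{-2}=O(M)$ directly---these are equivalent, the lattice sum being a Riemann sum for that same integral.
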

\begin{proof}
By \eqref{e65} and estimate \eqref{e52} we obtain
\be\begin{split}\label{e67} \|\tilde{u}_{(M)}-u_M\|_{\C^M}&\leq
\|(I_{d,M}+T_{d,M})^{-1}\|\|T_du-T_{d,M}u_M\|_{\C^M}\\
&\leq c_1\|T_du-T_{d,M}u_M\|_{\C^M},\end{split}\ee where $T_{d,M}$
and $T_d$ are defined in \eqref{e48} and \eqref{e60}, respectively.
Using the identity
$u(y)-u(x_j)=u(y)-u(x_j)-\mathcal{D}u(x_j)(y-x_j)+\mathcal{D}u(x_j)(y-x_j)$
and applying the triangle inequality, we get the estimate
\be\begin{split}\label{e68} \|T_du-T_{d,M}u_M\|_{\C^M}&=\max_{1\le
l\leq
M}\left|\sum_{j=1}^M\int_{D_j}g(x_l,y)p(y)(u(y)-u(x_j))dy\right|\\
&\leq \max_{1\le l\leq
M}\left|\int_{D_l}g(x_l,y)p(y)(u(y)-u(x_l))dy\right|\\
&+\max_{1\le l\leq
M}\left|\sum_{j=1,j\neq l}^M\int_{D_j}g(x_l,y)p(y)(u(y)-u(x_j))dy\right|\\
&\leq2\|p\|_\infty \|u\|_\infty\max_{1\le l\leq
M}\int_{D_l}|g(x_l,y)|dy\\
&+\max_{1\le l\leq M}\left|\sum_{j=1,j\neq
l}^M\int_{D_j}g(x_l,y)p(y)(u(y)-u(x_j))dy\right|\\
&\leq
2\|p\|_\infty\|u\|_\infty\int_{B_{\sqrt{3}/(2mP)}(x_l)}\frac{1}{4\pi|x_l-y|}dy\\
&+\max_{1\le l\leq M}\left|\sum_{j=1,j\neq
l}^M\int_{D_j}g(x_l,y)p(y)(u(y)-u(x_j))dy\right|\\
&=2\|p\|_\infty\|u\|_\infty\int_0^{\sqrt{3}/(2mP)}rdr\\
&+\max_{1\le l\leq M}\left|\sum_{j=1,j\neq
l}^M\int_{D_j}g(x_l,y)p(y)(u(y)-u(x_j))dy\right|\\
&\leq \frac{3\|p\|_\infty\|u\|_\infty}{2(mP)^2}+I_1+I_2,
\end{split}\ee where \be\label{e69} I_1:=\max_{1\le l\leq
M}\left|\sum_{j=1,j\neq
l}^M\int_{D_j}g(x_l,y)p(y)(u(y)-u(x_j)-\mathcal{D}u(x_j)(y-x_j))dy\right|
\ee and \be\label{e70} I_2:=\max_{1\le l\leq M}\left|\sum_{j=1,j\neq
l}^M\int_{D_j}g(x_l,y)p(y)\mathcal{D}u(x_j)(y-x_j)dy\right|. \ee Let
us derive an estimate for $I_1$. Using the Taylor expansion, we get
\be\label{e71} I_1\leq \|p\|_\infty\max_{1\le l\leq
M}\sum_{j=1,j\neq l}^M\int_{D_j}|g(x_l,y)|\sup_{0\leq s\leq
1}|\mathcal{D}^2u(sy+(1-s)x_j)||y-x_j|^2dy .\ee Since $p\in C^1(D)$,
$u\in C^2(D)$, $\int_D|g(x,y)|dy<\infty$ and $|y-x_j|\leq
\frac{\sqrt{3}}{2mP}$ for $y\in D_j$, it follows from \eqref{e71}
that \be\label{e72} I_1=O(1/(mP)^2)=O(1/M^{2/3}), \text{ as }M\to
\infty. \ee
Estimate of $I_2$ is obtained as follows. Since
$x_j$ is the center of the cube $D_j$, it follows that
\be\label{e73}
\int_{D_j}g(x_l,x_j)p(x_j)\mathcal{D}u(x_j)(y-x_j)dy=0,\quad
j=1,2,\hdots,M. \ee Therefore, using \eqref{e73}, $I_2$ can be
rewritten as follows: \be\label{e74}I_2=\max_{1\le l\leq
M}\left|\sum_{j=1,j\neq
l}^M\int_{D_j}(g(x_l,y)p(y)-g(x_l,x_j)p(x_j))\mathcal{D}u(x_j)(y-x_j)dy\right|.
\ee Let \be\label{e75} g_l(y):=g(x_l,y),\quad
(g_lp)(y)=g_l(y)p(y),\quad l=1,2,\hdots,M. \ee Then the formulas
\be\begin{split}\label{e76}
|(g_lp)(y)-(g_lp)(x_j)|&=\left|\int_0^1\frac{\partial}{\partial
t}(g_lp)(ty+(1-t)x_j) dt\right| \\
&\leq \sup_{0\leq t\leq
1}\left|\mathcal{D}_y(g_lp)(ty+(1-t)x_j)\right||y-x_j|\end{split}
\ee and
$\mathcal{D}_y(g_lp)(y)=p(y)\mathcal{D}_yg_l(y)+g_l(y)\mathcal{D}p(y)$,
yield the following estimate: \be\begin{split}\label{e77} I_2&\leq
\frac{\sqrt{3}\|\mathcal{D}u\|_\infty}{2mP}\max_{1\leq
l\leq M}\|p\|_\infty\int_D\sup_{0\leq t\leq 1}|\mathcal{D}_yg_l(ty+(1-t)x_j)||y-x_j|dy\\
&+\frac{\sqrt{3}\|\mathcal{D}u\|_\infty}{2mP}\max_{1\leq
l\leq M}\|\mathcal{D}p\|_\infty\int_D\sup_{0\leq t\leq 1}|g_l(ty+(1-t)x_j)||y-x_j|dy\\
&\leq \frac{c(k)\|\mathcal{D}u\|_\infty}{(mP)^2}\max_{1\leq l\leq
M}\|p\|_\infty\int_D\left(\frac{1}{4\pi|x_l-y|}+\frac{1}{4\pi|x_l-y|^2}\right)dy\\
&+\frac{\tilde{c}\|\mathcal{D}u\|_\infty}{(mP)^2}\max_{1\leq l\leq
M}\|\mathcal{D}p\|_\infty\int_D\frac{1}{4\pi|x_l-y|}dy=O(1/(mP)^2)=O(1/M^{2/3}),\end{split}\ee
where $\tilde{c}>0$ is a constant and $c(k)$ is a constant depending
on the wave number $k$. Here the estimates $|y-x_j|\leq
\sqrt{3}/(2mP)$ for $y\in D_j$,
$\int_D\frac{1}{4\pi|x_l-y|^\beta}dy<\infty$ for $\beta<3$, and
$|x_l-y|\leq 2|x_l-s|$ for $y\in D_j$, $j\neq l$, $s=tx_j+(1-t)y$,
$t\in [0,1]$, were used. The relation \eqref{e66} follows
from \eqref{e67}, \eqref{e68}, \eqref{e72} and \eqref{e77}.\\
\lemref{lem32} is proved.
\end{proof}

\begin{lem}\label{lem33} Let the Assumption A) hold. Consider
the linear algebraic system for the unknowns $u_e(x_l)$:
\be\label{e78} u_e(x_l)=u_0(x_l)-4\pi\sum_{j=1,j\neq
l}^{M}g(x_l,x_j)h(x_j)a^{2-\kappa}_{mP}u_e(x_j),\quad l=1,2,\hdots
M, \ee where $p(x)=4\pi h(x)N(x)\in C^2(D)$, $N(x)=1/\gamma^3,$
$M=(mP)^3$, and $g(x,y)$ is defined in \eqref{e40}. Then
\be\label{e79}
\|\tilde{u}_{(M)}-u_{e,M}\|_{\C^M}=O\left(\frac{\log M}
{M^{2/3}}+|1-\gamma^3Ma_{mP}^{2-\kappa}|\right) \text{ as }M\to
\infty, \ee where $\tilde{u}_{(M)}$ is defined in \eqref{e47},
 \be\label{e80}u_{e,M}:=\left(
                         \begin{array}{c}
                           u_e(x_1) \\
                           u_e(x_2) \\
                           \vdots \\
                           u_e(x_M) \\
                         \end{array}
                       \right)\in \C^M,\ee
and $u_e(x_j)$, $j=1,2,\hdots,M$, solve system \eqref{e78}.
\end{lem}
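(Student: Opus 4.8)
The plan is to recast both linear systems in operator form and then perturb off the boundedly invertible operator $I_{d,M}+T_{d,M}$ furnished by \lemref{lem31}. Since $p(x)=4\pi h(x)N(x)$ with $N(x)=1/\gamma^3$ (see \eqref{e42}), one has $4\pi h(x_j)a_{mP}^{2-\kappa}=V\,p(x_j)$ with $V:=\gamma^3a_{mP}^{2-\kappa}$, so system \eqref{e78} reads $(I_{d,M}+S_{d,M})u_{e,M}=u_{0,M}$, where $(S_{d,M}v)_l:=\sum_{j\neq l}V g(x_l,x_j)p(x_j)v_j$. Writing \eqref{e46} as $(I_{d,M}+T_{d,M})\tilde{u}_{(M)}=u_{0,M}$ and subtracting, one obtains the identity
\bee
\tilde{u}_{(M)}-u_{e,M}=(I_{d,M}+T_{d,M})^{-1}(S_{d,M}-T_{d,M})u_{e,M},
\eee
so by \eqref{e52}, $\|\tilde{u}_{(M)}-u_{e,M}\|_{\C^M}\leq c_1\|T_{d,M}-S_{d,M}\|\,\|u_{e,M}\|_{\C^M}$. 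It therefore suffices to bound the operator norm $\|T_{d,M}-S_{d,M}\|$ (in the $\max$-norm on $\C^M$) and to show $\|u_{e,M}\|_{\C^M}=O(1)$.

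For the operator norm I would estimate $\max_l\sum_j|c_{lj}|$, where $c_{lj}$ denotes the $(l,j)$ entry of $T_{d,M}-S_{d,M}$. The diagonal entry equals $\int_{D_l}g(x_l,y)p(y)\,dy$, whose modulus is $\leq\|p\|_\infty\int_{D_l}|g(x_l,y)|\,dy=O(1/M^{2/3})$ by the integral already computed in \eqref{e68}. For $j\neq l$ I would split
\bee
c_{lj}=\int_{D_j}\big[g(x_l,y)p(y)-g(x_l,x_j)p(x_j)\big]\,dy+(|D_j|-V)\,g(x_l,x_j)p(x_j),\qquad |D_j|=1/M.
\eee
In the first term the linear Taylor term of the kernel $g(x_l,\cdot)p(\cdot)$ integrates to zero over the symmetric cube $D_j$, exactly as in \eqref{e73}, leaving a second-order remainder bounded by $C\|p\|_{C^2}(mP)^{-2}|D_j|\sup_{D_j}|x_l-y|^{-3}$; note that only $p\in C^2(D)$ is used here, since the $v_j$ are mere coefficients, so this is a genuine operator-norm bound and not the solution-dependent estimate of \lemref{lem32}. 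In the second term, $|D_j|-V=\tfrac1M\big(1-\gamma^3Ma_{mP}^{2-\kappa}\big)$ by \eqref{e28}, and $|g(x_l,x_j)p(x_j)|\leq C|x_l-x_j|^{-1}$.

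The two resulting sums are controlled by comparison with integrals, using $|D_j|=1/M$ and the minimal spacing $|x_l-y|\geq c/(mP)$ for $y\in D_j$, $j\neq l$, which follows from the grid geometry. One finds $\sum_{j\neq l}|D_j|\sup_{D_j}|x_l-y|^{-1}=O\big(\int_{|y-x_l|\geq c/(mP)}|x_l-y|^{-1}dy\big)=O(1)$, while, crucially,
\bee
\sum_{j\neq l}|D_j|\sup_{D_j}|x_l-y|^{-3}=O\Big(\int_{|y-x_l|\geq c/(mP)}|x_l-y|^{-3}\,dy\Big)=O(\log M),
\eee
the borderline integral $\int r^{-1}dr$ over $[c/(mP),\,O(1)]$ producing the logarithm. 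Hence the first (smoothness) term contributes $C(mP)^{-2}O(\log M)=O(M^{-2/3}\log M)$ to $\max_l\sum_j|c_{lj}|$, and the second (volume-mismatch) term contributes $|1-\gamma^3Ma_{mP}^{2-\kappa}|\cdot O(1)$, so that $\|T_{d,M}-S_{d,M}\|=O\big(M^{-2/3}\log M+|1-\gamma^3Ma_{mP}^{2-\kappa}|\big)\to0$.

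Finally, since $\|(I_{d,M}+T_{d,M})^{-1}\|\leq c_1$ and $\|T_{d,M}-S_{d,M}\|\to0$, a Neumann-series argument (as in \eqref{e55}--\eqref{e59}) shows $I_{d,M}+S_{d,M}$ is boundedly invertible for all large $M$, whence $\|u_{e,M}\|_{\C^M}=\|(I_{d,M}+S_{d,M})^{-1}u_{0,M}\|_{\C^M}\leq C\|u_0\|_{C(D)}=O(1)$. Substituting the two bounds into the identity of the first paragraph yields \eqref{e79}. I expect the principal difficulty to be the off-diagonal sum $\sum_{j\neq l}|D_j|\sup_{D_j}|x_l-y|^{-3}=O(\log M)$: in contrast to the $O(1/M^{2/3})$ estimate of \lemref{lem32}, the second-order remainder here carries the singular factor $|x_l-y|^{-3}$, whose summation is only logarithmically convergent and is the genuine source of the $\log M$ in \eqref{e79}; controlling it requires exploiting the minimal interparticle spacing near the diagonal rather than a crude bound on $\int_D|g|\,dy$.
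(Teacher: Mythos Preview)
Your proposal is correct and follows essentially the same route as the paper: one rewrites \eqref{e78} as $(I_{d,M}+T_e)u_{e,M}=u_{0,M}$ (your $S_{d,M}$ is exactly the paper's $T_e$, since $(\gamma mPa_{mP}^{(2-\kappa)/3})^3|D_j|=\gamma^3a_{mP}^{2-\kappa}=V$), subtracts from \eqref{e46}, and applies \eqref{e52} to reduce the problem to estimating $(T_e-T_{d,M})u_{e,M}$. The paper then splits this into four pieces $J_0,J_1,J_2,J_3$ (diagonal, $p(y)-p(x_j)$, volume mismatch, $g(x_l,y)-g(x_l,x_j)$), whereas you Taylor-expand the product $g(x_l,\cdot)p(\cdot)$ in one step, merging $J_1$ and $J_3$; the $\log M$ arises in the paper from the $|x_l-y|^{-3}$ term in $|\mathcal D^2g|$ exactly as you anticipate, via the same borderline integral $\int_{c/(mP)}^{\sqrt3}r^{-1}\,dr$.

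Two remarks. First, your packaging as an \emph{operator-norm} bound $\|T_{d,M}-S_{d,M}\|$ is slightly stronger than what the paper writes (it estimates only the vector $(T_e-T_{d,M})u_{e,M}$), but the computation is the same. Second, your Neumann-series step showing $\|u_{e,M}\|_{\C^M}=O(1)$ is a genuine improvement over the paper's write-up: the factor $\|u_e\|_{\C^M}$ appears in every one of the paper's estimates \eqref{e90}--\eqref{e101}, but its uniform boundedness is never argued there, so your argument closes that gap.
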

\begin{proof}
Let us rewrite \eqref{e78} as \be\begin{split}\label{e81}
u_e(x_l)&=u_0(x_l)-\sum_{j=1,j\neq l}^{M}g(x_l,x_j)p(x_j)\frac{a^{2-\kappa}_{mP}}{N(x_j)|D_j|}u_e(x_j)|D_j|\\
&=u_0(x_l)-(T_eu_{e,M})_l,\quad l=1,2,\hdots,M,
\end{split}\ee where $p(x)=4\pi h(x)N(x)$, $|D_j|=1/(mP)^3$ is the volume of the cube $D_j$,
$N(x)=1/\gamma^3$ and \be\label{e82} (T_ev)_l:=\sum_{j=1,j\neq
l}^{M}g(x_l,x_j)p(x_j)\left(\gamma
mPa^{(2-\kappa)/3}_{mP}\right)^3|D_j|v_j,\ v=\left(
                         \begin{array}{c}
                           v_1 \\
                           v_2 \\
                           \vdots \\
                           v_M \\
                         \end{array}
                       \right)\in \C^M,\ee $l=1,2,\hdots, M$.\\ Let us derive an estimate for $\|\tilde{u}_{(M)}-u_{e,M}\|_{\C^M}$. Using
equations \eqref{e46} and \eqref{e81}, we obtain
\be\begin{split}\label{e83}
(I_{d,M}+T_{d,M})(\tilde{u}_{(M)}-u_{e,M})&=
(I_{d,M}+T_{d,M})\tilde{u}_{(M)}-(I_{d,M}+T_{d,M})u_{e,M}\\
&=u_{0,M}-u_{e,M}-T_{d,M}u_{e,M}=T_eu_{e,M}-T_{d,M}u_{e,M},\end{split}\ee
where $\tilde{u}_{(M)}$ and $u_{0,M}$ are defined in \eqref{e47},
$u_{e,M},$ $T_{d,M}$ and $T_e$ are defined in \eqref{e80},
\eqref{e48} and \eqref{e82}, respectively. Relation \eqref{e83}
implies \be\begin{split}\label{e84}
\|\tilde{u}_{(M)}-u_{e,M}\|_{\C^M}&\leq
\|(I_{d,M}+T_{d,M})^{-1}\|\|T_eu_{e,M}-T_{d,M}u_{e,M}\|_{\C^M}\\
&\leq c_1\|T_eu_{e,M}-T_{d,M}u_{e,M}\|_{\C^M},\end{split}\ee where
estimate \eqref{e52} was used.

Let us derive an estimate for
$\|T_eu_{e,M}-T_{d,M}u_{e,M}\|_{\C^M}$. Using definitions
\eqref{e48} and \eqref{e82}, and applying the triangle inequality,
one gets
\be\label{e85}\begin{split} &\|(T_e-T_{d,M})u_{e,M}\|_{\C^M}\\
&=\max_{1\leq l\leq M}\left|\sum_{j=1,j\neq l}^Mg_l(x_j)p_{a_{mP}}(x_j)u_e(x_j)|D_j|-\sum_{j=1}^M\int_{D_j}g_l(y)p(y)dyu_e(x_j) \right|\\
&=\max_{1\leq l\leq M}\left|\sum_{j=1,j\neq l}^M\int_{D_j}\left(g_l(x_j)p_{a_{mP}}(x_j)-g_l(y)p(y)\right)dyu_e(x_j)-\int_{D_l}g_l(y)p(y)dyu_e(x_l)\right|\\
&\leq \max_{1\leq l\leq
M}\left|\int_{D_l}g_l(y)p(y)u_e(x_l)dy\right|+\max_{1\leq l\leq
M}\left|\sum_{j=1,j\neq
l}^M\int_{D_j}g_l(y)(p(y)-p(x_j))u_e(x_j)dy\right| \\
&+\max_{1\leq l\leq M}\left|\sum_{j=1,j\neq
l}^M\int_{D_j}g_l(y)(p(x_j)-p_{a_{mP}}(x_j))u_e(x_j)dy\right|\\
&+ \max_{1\leq l\leq M}\left|\sum_{j=1,j\neq
l}^Mp_{a_{mP}}(x_j)\int_{D_j}(g_l(y)-g_l(x_j))u_e(x_j)dy\right|\\
&\leq \max_{1\leq l\leq M}(J_0(l)+J_1(l)+J_2(l)+J_3(l)),
\end{split}\ee where $g_l(y):=g(x_l,y)$, $ p_{a_{mP}}(x):=p(x)\left(\gamma
mPa^{(2-\kappa)/3}_{mP}\right)^3$, \be\label{e86}
J_0(l):=\int_{D_l}\left|g(x_l,y)p(y)u_e(x_l)\right|dy, \ee
\be\label{e87} J_1(l):=\sum_{j=1,j\neq
l}^M\left|\int_{D_j}g(x_l,y)(p(y)-p(x_j))u_e(x_j)dy\right|, \ee
\be\label{e88} J_2(l):=\sum_{j=1,j\neq
l}^M\int_{D_j}\left|g(x_l,y)(p(x_j)-p_{a_{mP}}(x_j))u_e(x_j)\right|dy,\ee
and \be\label{e89} J_3(l):=\sum_{j=1,j\neq
l}^M|p_{a_{mP}}(x_j)|\left|\int_{D_j}(g(x_l,y)-g(x_l,x_j))u_e(x_j)dy\right|.\ee

Using the estimate $|x_l-y|\leq \sqrt{3}/(2mP)$ for $y\in D_l$, one
gets the following estimate of $J_0(l)$: \be\begin{split}\label{e90}
J_0(l)&\leq \|p\|_\infty\|u_e\|_{\C^M}\int_{D_l}|g(x_l,y)|dy\\
&\leq
\left(\int_{B_{\sqrt{3}/(2mP)}(x_l)}\frac{1}{4\pi|x_l-y|}dy\right)\|p\|_\infty\|u_e\|_{\C^M}\\
&=\left(\int_0^{\sqrt{3}/(2mP)}rdr\right)\|p\|_\infty\|u_e\|_{\C^M}\\
&=\frac{3\|p\|_\infty\|u_e\|_{\C^M} }{2(2mP)^2}=O(1/M^{2/3}),
\end{split}\ee where $B_a(x)$ is defined in \eqref{e19}.

Let us estimate $J_1(l)$. Using the identity \be\label{e91}
p(y)-p(x_j)=p(y)-p(x_j)-\mathcal{D}p(x_j)\cdot
(y-x_j)+\mathcal{D}p(x_j)\cdot (y-x_j) \ee in \eqref{e87} and
applying the triangle inequality, one obtains
\be\label{e92} J_1(l)\leq
J_{1,1}+J_{1,2}, \ee where \be\label{e93} J_{1,1}:=\sum_{j=1,j\neq
l}^M\left|\int_{D_j}g(x_l,y)[p(y)-p(x_j)-\mathcal{D}p(x_j)\cdot
(y-x_j)]u_e(x_j)dy\right|, \ee and \be\label{e94}
J_{1,2}:=\sum_{j=1,j\neq
l}^M\left|\int_{D_j}g(x_l,y)\mathcal{D}p(x_j)\cdot
(y-x_j)u_e(x_j)dy\right|. \ee To get an estimate for $J_{1,1}$, we
apply the Taylor expansion of $p(x)$ and get
\be\begin{split}\label{e95} J_{1,1}&\leq
\frac{\|u_e\|_{\C^M}}{2}\sum_{j=1,j\neq l}^M
\int_{D_j}|g(x_l,y)|\sup_{0\leq t\leq 1}|\mathcal{D}^2p(ty+(1-t)x_j)||y-x_j|^2dy\\
&\leq
\frac{3\|\mathcal{D}^2p\|_\infty\|u_e\|_{\C^M}}{8(mP)^2}\sum_{j=1,j\neq
l}^M\int_{D_j}|g(x_l,y)|dy\\
&=O(1/(mP)^2)=O(1/M^{2/3})\ \text{ as }M\to \infty,
\end{split}\ee
where $B_a(x)$ is defined in \eqref{e19}, and the estimate
$|y-x_j|\leq \sqrt{3}/(2mP),\ y\in D_j,$ was used.

Using the identity
\be\label{e96} \int_{D_j}g(x_l,x_j)\mathcal{D}p(x_j)(y-x_j)u_e(x_j)
dy=0,\quad j=1,2,\hdots, M,\ee
one derives the following estimate of
$J_{1,2}$:
\be\begin{split}\label{e97} J_{1,2}&=\sum_{j=1,j\neq
l}^M\left|\int_{D_j}(g(x_l,y)-g(x_l,x_j))\mathcal{D}p(x_j)\cdot
(y-x_j)u_e(x_j)dy\right|\\
&\leq \|u_e\|_{\C^M}\sum_{j=1,j\neq l}^M\int_{D_j}\left|(g(x_l,y)-g(x_l,x_j))\mathcal{D}p(x_j)\cdot(y-x_j)\right|dy\\
&\leq \frac{\sqrt{3}\|p\|_\infty\|u_e\|_{\C^M}}{2mP}\sum_{j=1,j\neq l}^M\int_{D_j}|g(x_l,y)-g(x_l,x_j)|dy\\
&\leq \frac{\sqrt{3}\|p\|_\infty\|u_e\|_{\C^M}}{2mP}\sum_{j=1,j\neq l}^M\int_{D_j}\sup_{0\leq t\leq 1}|\mathcal{D}g(x_l,ty+(1-t)x_j)||y-x_j|dy\\
&\leq \frac{c(k)\|p\|_\infty\|u_e\|_{\C^M}}{(mP)^2}\sum_{j=1,j\neq l}^M\int_{D_j}\sup_{0\leq t\leq 1}\frac{1}{4\pi|x_l-ty-(1-t)x_j|^2}dy\\
&+\frac{c(k)\|p\|_\infty\|u_e\|_{\C^M}}{(mP)^2}\sum_{j=1,j\neq l}^M\int_{D_j}\sup_{0\leq t\leq 1}\frac{1}{4\pi|x_l-ty-(1-t)x_j|}dy\\
&\leq \frac{c(k)\|p\|_\infty\|u_e\|_{\C^M}}{(mP)^2}\int_{B_{\sqrt{3}}(x_l)}\left(\frac{1}{4\pi|x_l-y|}+\frac{1}{4\pi|x_l-y|^2}\right)dy\\
&=O(1/(mP)^2)=O(1/M^{2/3}) \text{ as }M\to \infty,\end{split}\ee
where $c(k)$ is a constant depending on the wave number $k$ and
$B_a(x)$ is defined in \eqref{e19}. Here the estimates $|y-x_j|\leq
\sqrt{3}/(2mP)$ for $y\in D_j$,
$\int_D\frac{1}{4\pi|x_l-y|^\beta}dy<\infty$ for $\beta<3$, and
$|x_l-y|\leq 2|x_l-s|$ for $y\in D_j$, $j\neq l$, $s=tx_j+(1-t)y$,
$t\in [0,1]$, were used. Applying estimates \eqref{e95} and
\eqref{e97} to \eqref{e92}, we get \be\label{e98}
J_1(l)=O(1/M^{2/3}), \text{ as }M\to \infty. \ee

Let us derive an estimate for $J_2(l)$. From \eqref{e88} and the
definition $p_{a_{mP}}(x)=p(x)(\gamma mPa^{(2-\kappa)/3})^3$  we get
\be\begin{split}\label{e99}
J_2(l)&\leq \|u_e\|_{\C^M}\sum_{j=1,j\neq l}^M\int_{D_j}|g(x_l,y)||p(x_j)||1-(\gamma mPa^{(2-\kappa)/3})^3|dy\\
&\leq \|u_e\|_{\C^M}\|p\|_\infty |1-(\gamma mPa^{(2-\kappa)/3})^3|\sum_{j=1,j\neq l}^M\int_{D_j}|g(x_l,y)|dy\\
&\leq \|u_e\|_{\C^M}\|p\|_\infty |1-(\gamma
mPa^{(2-\kappa)/3})^3|\int_{B_{\sqrt{3}}(x_l)}\frac{1}{4\pi|x_l-y|}dy\\
&=\left(\int_0^{\sqrt{3}}rdr\right)\|u_e\|_{\C^M}\|p\|_\infty
|1-(\gamma mPa^{(2-\kappa)/3})^3|\\
&= \frac{3}{2}\|u_e\|_{\C^M}\|p\|_\infty |1-\gamma^3
Ma_{mP}^{(2-\kappa)}|=O(|1-\gamma^3 Ma_{mP}^{(2-\kappa)}|),
\end{split}\ee where $B_a(x)$ is defined in \eqref{e19}. Using the
relation $\lim_{M\to \infty}\gamma^3 Ma^{(2-\kappa)}=1$, one gets
$J_2(l)\to 0$ as $M\to \infty$. Estimate of $J_3(l)$ is derived
as follows. Using the identity
\be\label{e100}
\int_{D_j}\mathcal{D}g(x_l,x_j)(y-x_j)u_e(x_j)dy=0 ,\quad
j=1,2,\hdots M,\ee one gets the following
estimate:\be\begin{split}\label{e101}
J_3(l)&=\sum_{j=1,j\neq l}^M|p_{a_{mP}}(x_j)|\left|u_e(x_j)\int_{D_j}\left[g(x_l,y)-g(x_l,x_j)-\mathcal{D}g(x_l,x_j)\cdot(y-x_j)\right]dy\right|\\
&\leq \frac{\|p\|_\infty\gamma^3Ma_{mP}^{2-\kappa}}{2}\sum_{j=1,j\neq l}^M\int_{D_j}\sup_{0\leq t\leq 1}|\mathcal{D}^2g(x_l,ty+(1-t)x_j)||y-x_j|^2dy\\
&\leq \frac{3c_M}{8(mP)^2}\sum_{j=1,j\neq l}^M\int_{D_j}\sup_{0\leq t\leq 1}|\mathcal{D}^2g(x_l,ty+(1-t)x_j)|dy\\
&\leq \frac{c(k)c_M}{(mP)^2}\sum_{j=1,j\neq l}^M\int_{D_j}\sup_{0\leq t\leq 1}\frac{1}{4\pi|x_l-ty-(1-t)x_j|}dy\\
&+\frac{c(k)c_M}{(mP)^2}\sum_{j=1,j\neq l}^M\int_{D_j}\sup_{0\leq t\leq 1}\frac{1}{4\pi|x_l-ty-(1-t)x_j|^2}dy\\
&+ \frac{c(k)c_M}{(mP)^2}\sum_{j=1,j\neq l}^M\int_{D_j}\sup_{0\leq t\leq 1}\frac{1}{4\pi|x_l-ty-(1-t)x_j|^3}dy\\
&\leq \frac{c(k)c_M}{(mP)^2}\int_{1/(2mP)<|x_l-y|<\sqrt{3}}\left(\frac{1}{4\pi|x_l-y|}+\frac{1}{4\pi|x_l-y|^2}+\frac{1}{4\pi|x_l-y|^3}\right)dy\\
&\leq\frac{c(k)c_M}{(mP)^2}\int_{1/(2mP)}^{\sqrt{3}}\left(r+1+\frac{1}{r}\right)dr\\
&\leq
\frac{2c(k)c_M}{(mP)^2}\left[1+\log(\sqrt{3})-\log\left(1/(2mP)\right)\right]\\
&=\frac{2c(k)c_M}{M^{2/3}}\left[1+\log(\sqrt{3})-\log\left(1/(2M^{1/3})\right)\right]=O\left(\frac{\log
M}{M^{2/3}}\right),
\end{split}\ee
where $c_M:=\|p\|_\infty\gamma^3Ma_{mP}^{2-\kappa}$, $c(k)$ is a
constant depending on the wave number $k$. Here the estimates $|y-x_j|\leq
\sqrt{3}/(2mP)$ for $y\in D_j$, and $|x_l-y|\leq 2|x_l-s|$ for $y\in
D_j$, $j\neq l$, $s=tx_j+(1-t)y$, $t\in [0,1]$, were used.
Using estimates \eqref{e95}, \eqref{e97}, \eqref{e99} and \eqref{e101}, one gets relation \eqref{e79}.\\
\lemref{lem33} is proved.
\end{proof}
The following theorem is a consequence of \lemref{lem32} and
\lemref{lem33}.
\begin{thm}\label{thm34}
Suppose that the assumptions of \lemref{lem32} and \lemref{lem33}
hold. Then \be\label{e102}
\|u_M-u_{e,M}\|_{\C^M}=O\left(\frac{\log M }{M^{2/3}}+
|1-\gamma^3Ma_{mP}^{2-\kappa}|\right)
\text{ as }M\to \infty, \ee where $u_M$ and $u_{e,M}$ are defined in
\eqref{e61} and \eqref{e80}, respectively.
\end{thm}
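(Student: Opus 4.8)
The plan is to deduce \thmref{thm34} from the two preceding lemmas by a single application of the triangle inequality, using the solution $\tilde{u}_{(M)}$ of the discrete system \eqref{e46} as a common intermediate vector. The point is that \lemref{lem32} already measures how far $\tilde{u}_{(M)}$ lies from $u_M$, and \lemref{lem33} already measures how far $\tilde{u}_{(M)}$ lies from $u_{e,M}$; hence no fresh analysis of the kernels $g(x_l,y)$ or of the perturbation terms is required, and the estimate for $\|u_M-u_{e,M}\|_{\C^M}$ is obtained simply by combining the two.

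Concretely, I would first insert $\tilde{u}_{(M)}$ and write
\[
\|u_M-u_{e,M}\|_{\C^M}\leq\|u_M-\tilde{u}_{(M)}\|_{\C^M}+\|\tilde{u}_{(M)}-u_{e,M}\|_{\C^M}.
\]
Since the hypotheses of \thmref{thm34} are exactly the union of the hypotheses of \lemref{lem32} and \lemref{lem33}, both lemmas apply simultaneously: the first term is $O(1/M^{2/3})$ by \eqref{e66}, and the second is $O\!\left(\tfrac{\log M}{M^{2/3}}+|1-\gamma^3Ma_{mP}^{2-\kappa}|\right)$ by \eqref{e79}.

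Finally I would absorb the $O(1/M^{2/3})$ contribution into the already-present $O((\log M)/M^{2/3})$ term, which is legitimate because $\log M\geq 1$, and hence $1/M^{2/3}\leq(\log M)/M^{2/3}$, for all $M\geq 3$. Adding the two big-$O$ estimates then gives precisely the claimed bound \eqref{e102}. I do not expect any genuine obstacle here, since all the difficult kernel estimates were carried out in the proofs of the two lemmas; the only things needing care are the routine bookkeeping of the $O$-terms and the observation that $\tilde{u}_{(M)}$, as defined in \eqref{e47}, is indeed the identical reference vector entering both lemmas, so that the triangle inequality above is valid.
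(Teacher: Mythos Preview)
Your proposal is correct and is exactly the approach the paper intends: the paper simply states that \thmref{thm34} ``is a consequence of \lemref{lem32} and \lemref{lem33}'' without writing out the triangle-inequality step, which you have supplied in full. The absorption of $O(1/M^{2/3})$ into $O((\log M)/M^{2/3})$ is the only bookkeeping needed, and your justification for it is valid.
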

To get the rate of convergence \eqref{e102} we have assumed that
$p(x)\in C^2(D)$. If $p(x)\in C(D)$ then the rate given
in \thmref{thm34} is no longer valid. The rate of
$\|u_M-u_{e,M}\|_{\C^M}$ when $p(x)\in C(D)$ is given in the
following theorem.
\begin{thm}\label{thm35}
Let Assumption A) hold and $p\in C(D)$ satisfies \be |p(x)-p(y)|\leq
\omega_p(|x-y|),\quad \forall x,y\in D,\ee where $\omega_p$ is the
modulus of continuity of the function $p(x)$.Then
\be\label{e104}\begin{split}
\|u_M-u_{e,M}\|_{\C^M}&=O\left(\frac{\log M }{M^{1/3}}+
|1-\gamma^3Ma_{mP}^{2-\kappa}|+ \omega_p(1/M^{1/3})\right),
\end{split}\ee
where $u_M$ and $u_{e,M}$ are defined in \eqref{e61}
and \eqref{e80}, respectively.
\end{thm}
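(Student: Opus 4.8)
The plan is to follow the architecture of the proof of \thmref{thm34}, keeping the same two-step decomposition through the intermediary vector $\tilde{u}_{(M)}$ of \eqref{e47}, and to re-examine only those estimates in \lemref{lem32} and \lemref{lem33} that genuinely used the differentiability of $p$. First I would note that \lemref{lem31} remains available: its proof invokes only $\|p\|_\infty$ (through \eqref{e53}) and the smoothness of $g$, never a derivative of $p$, so the bound $\|(I_{d,M}+T_{d,M})^{-1}\|\leq c_1$ still holds for $p\in C(D)$. Combining \eqref{e65} and \eqref{e83} with this bound, it suffices to estimate $\|T_du-T_{d,M}u_M\|_{\C^M}$ and $\|(T_e-T_{d,M})u_{e,M}\|_{\C^M}$ under the weaker hypothesis $p\in C(D)$, and then to apply the triangle inequality $\|u_M-u_{e,M}\|_{\C^M}\leq\|u_M-\tilde{u}_{(M)}\|_{\C^M}+\|\tilde{u}_{(M)}-u_{e,M}\|_{\C^M}$.

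For the first quantity I would use that the solution $u$ of \eqref{e39} with continuous $p$ is still Lipschitz: writing $u=u_0-Tu$, the term $Tu$ is a volume potential of the bounded density $pu$, hence $u\in C^1(D)$ and $|u(y)-u(x_j)|\leq C/M^{1/3}$ for $y\in D_j$. Since $\int_D|g(x_l,y)|\,dy=O(1)$, the off-diagonal part of $T_du-T_{d,M}u_M$ is then $O(1/M^{1/3})$ and the diagonal part is $O(1/M^{2/3})$, exactly as the first line of \eqref{e68}. The essential difference from \lemref{lem32} is that, $p$ being non-differentiable, one can no longer subtract the linear term and exploit the centroid cancellation $\int_{D_j}(y-x_j)\,dy=0$ (this is precisely what produced $I_2$ and the extra factor $1/M^{1/3}$ in \eqref{e77}); consequently this contribution degrades from $O(1/M^{2/3})$ to $O(1/M^{1/3})$.

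For the second quantity I would reuse the splitting \eqref{e85} into $J_0,J_1,J_2,J_3$. The terms $J_0$ and $J_2$ are estimated verbatim as in \eqref{e90} and \eqref{e99}, giving $O(1/M^{2/3})$ and $O(|1-\gamma^3Ma_{mP}^{2-\kappa}|)$, since neither uses smoothness of $p$. The term $J_3$ of \eqref{e101} depends only on the smoothness of $g$ and on the cube symmetry \eqref{e100}, not on $p$, so it is unchanged and still equals $O(\log M/M^{2/3})$. The one term that must be redone is $J_1$: in the $C^2$ case \eqref{e91}--\eqref{e97} the Taylor expansion of $p$ was used, but now only $|p(y)-p(x_j)|\leq\omega_p(|y-x_j|)\leq\omega_p(\sqrt{3}/(2mP))$ is available. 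Bounding $J_1(l)\leq\|u_e\|_{\C^M}\,\omega_p(\sqrt{3}/(2mP))\sum_{j\neq l}\int_{D_j}|g(x_l,y)|\,dy$ and using $\int_D|g(x_l,y)|\,dy=O(1)$ together with $\omega_p(\sqrt{3}/(2mP))\leq\omega_p(1/M^{1/3})$ yields $J_1=O(\omega_p(1/M^{1/3}))$.

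Collecting the pieces gives $\|u_M-u_{e,M}\|_{\C^M}=O(1/M^{1/3}+\log M/M^{2/3}+|1-\gamma^3Ma_{mP}^{2-\kappa}|+\omega_p(1/M^{1/3}))$, and since both $1/M^{1/3}$ and $\log M/M^{2/3}$ are $O(\log M/M^{1/3})$, this is exactly \eqref{e104}. The main obstacle, and indeed the whole content of the theorem, is the loss of the symmetry-cancellation gain in the $p$-dependent contributions: where $C^2$ regularity allowed one to subtract the linear Taylor term and win a factor $1/M^{1/3}$, mere continuity forces the crude bounds above, which both slows the polynomial rate from $1/M^{2/3}$ to $1/M^{1/3}$ and replaces the Taylor remainder by the modulus of continuity $\omega_p(1/M^{1/3})$.
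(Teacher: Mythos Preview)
Your proposal is correct and follows essentially the same route as the paper: the same triangle-inequality decomposition through $\tilde{u}_{(M)}$, the same use of \lemref{lem31}, the observation that $u\in C^1(D)$ suffices to control $T_du-T_{d,M}u_M$ at the rate $O(1/M^{1/3})$, and the replacement of the Taylor-remainder bound on $p$ by $\omega_p$ in the $J_1$ term. The only cosmetic differences are that the paper keeps the $I_1,I_2$ split of \eqref{e69}--\eqref{e70} (re-estimating only $I_2$) rather than bounding $|u(y)-u(x_j)|$ directly as you do, and it regroups the four $J$-terms of \eqref{e85} into a slightly different $J_1,J_{2,1},J_{2,2}$ arrangement; the resulting estimates are identical.
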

\begin{proof}

We have \be\label{e105} \|u_M-u_{e,M}\|_{\C^M}\leq
\|u_M-\tilde{u}_M\|_{\C^M}+\|\tilde{u}_M-u_{e,M}\|_{\C^M}. \ee Let
us estimate $\|u_M-\tilde{u}_M\|_{\C^M}$. From \eqref{e67} we have
\be \|u_M-\tilde{u}_M\|_{\C^M}\leq c_1
\|T_du-T_{d,M}u_M\|_{\C^M},\ee where $c_1$ is defined in
\eqref{e52}. Using the similar steps given in \eqref{e68} we get the
following estimate for $\|T_du-T_{d,M}u_M\|_{\C^M}$ :
\be\label{e107}\|T_du-T_{d,M}u_M\|_{\C^M}\leq
\frac{3\|p\|_\infty\|u\|_\infty}{2(mP)^2}+I_1+I_2, \ee where $I_1$
and $I_2$ are defined in \eqref{e69} and \eqref{e70}, respectively.
It is shown in \eqref{e72} that $I_1=O(1/M^{2/3}).$ Since $p(x)\in
C(D)$, the steps \eqref{e74}-\eqref{e77} are no longer valid. The
estimate of $I_2$ can be derived as follows. Since $p\in C(D)$ and
$u\in C^1(D)$, it follows from \eqref{e70} that \be\begin{split}
I_2&\leq \|p\|_\infty\|\mathcal{D}u\|_\infty\max_{1\leq l\leq
M}\sum_{j=1,\neq l}^M\int_{D_j}|g(x_l,y)||x_l-y|dy\\
&\leq
\frac{\sqrt{3}\|p\|_\infty\|\mathcal{D}u\|_\infty}{2mP}\max_{1\leq
l\leq
M}\int_{B_{\sqrt{3}}(x_l)}\frac{1}{4\pi|x_l-y|}dy=O(1/(mP))
=O(1/M^{1/3}).\end{split}\ee
This together with \eqref{e107} and $I_1=O(1/M^{2/3})$ yield
\be\label{e109} \|u_M-\tilde{u}_M\|_{\C^M}=O(1/M^{1/3}). \ee

Let us find an estimate for $\|\tilde{u}_M-u_{e,M}\|_{\C^M}$. From
\eqref{e84} we have \be \|\tilde{u}_M-u_{e,M}\|_{\C^M}\leq
c_1\|T_eu_{e,M}-T_{d,M}u_{e,M}\|_{\C^M},\ee where $c_1$ is defined
in \eqref{e52}. By definitions \eqref{e48} and \eqref{e82}, and use
the triangle inequality, we get \be
\|T_eu_{e,M}-T_{d,M}u_{e,M}\|_{\C^M}\leq J_1+J_2, \ee where \be
 J_1:=\max_{1\leq
l\leq M}\left|\int_{D_l}g(x_l,y)p(y)dyu_e(x_l)dy \right| \ee and \be
J_2:=\max_{1\leq l\leq M}\left|\sum_{j=1,j\neq
l}^M\int_{D_j}\left(g(x_l,x_j)p_{a_{mP}}(x_j)-g(x_l,y)p(y)\right)u_e(x_j)dy\right|,\ee
$p_{a_{mP}}:=p(x)(\gamma mPa^{(2-\kappa)/3})^3$. It is proved in
\eqref{e90} that $J_1=O(1/M^{2/3}).$ The estimate of $J_2$ is
derived as follows. By the triangle inequality we obtain \be J_2\leq
J_{2,1}+J_{2,2}, \ee where \be J_{2,1}:=\max_{1\leq l\leq
M}\left|\sum_{j=1,j\neq
l}^M\int_{D_j}(g(x_l,x_j)-g(x_l,y))p_{a_{mP}}(x_j)u_e(x_j)dy\right|
\ee and \be J_{2,2}:=\max_{1\leq l\leq
M}\|u_e\|_{\C^M}\sum_{j=1,j\neq
l}^M\int_{D_j}|g(x_l,y)||p_{a_{mP}}(x_j)-p(y)|dy .\ee It is proved
in \eqref{e101} that \be\label{e117} J_{2,1}=O\left(\frac{\log M
}{M^{2/3}}\right). \ee To estimate $J_{2,2}$, we apply the triangle
inequality and get \be\begin{split}\label{e118} J_{2,2}&\leq
\|u_e\|_{\C^M}\max_{1\leq l\leq M}\sum_{j=1,j\neq
l}^M\int_{D_j}|g(x_l,y)||p_{a_{mP}}(x_j)-p(x_j)|dy\\
&+\|u_e\|_{\C^M}\max_{1\leq l\leq M}\sum_{j=1,j\neq
l}^M\int_{D_j}|g(x_l,y)||p(x_j)-p(y)|dy\\
&\leq |\gamma^3Ma_{mP}^{2-\kappa}-1|\|p\|_\infty\max_{1\leq l\leq
M}\|u_e\|_{\C^M}\sum_{j=1,j\neq
l}^M\int_{D_j}|g(x_l,y)|dy\\
&+\max_{j}\sup_{y\in D_j}\omega_p(|x_j-y|)\|u_e\|_{\C^M}\max_{1\leq
l\leq M}\sum_{j=1,j\neq l}^M\int_{D_j}|g(x_l,y)|dy\\
&=O\left(|\gamma^3Ma_{mP}^{2-\kappa}-1|+ \omega_p(1/M^{1/3})\right),
\end{split}\ee where $\omega_p $ is the modulus of continuity of
$p(x)$. This together with $J_1=O(1/M^{2/3})$ and \eqref{e117} yield
\be\label{e119}\begin{split}
\|\tilde{u}_M-u_{e,M}\|_{\C^M}&=O\left(\frac{ \log M }{M^{2/3}}+
|\gamma^3Ma_{mP}^{2-\kappa}-1|+\omega_p(1/M^{1/3})\right).
\end{split}\ee
Relation \eqref{e104}
follows from \eqref{e105}, \eqref{e109} and
\eqref{e119}.\\
\thmref{thm35} is proved.

\end{proof}

As we mentioned in the introduction, the main goal is to develop an
algorithm for obtaining the minimal number of the embedded small
balls which generate a material whose refraction coefficient differs
from the desired one by not more than a desired small quantity. Let
us derive an approximation of the desired refraction coefficient
$n^2(x)$ generated by the embedded small balls. We rewrite the sum
in \eqref{e38} as \be\begin{split}\label{e120}
\sum_{j=1}^Mg(x,x_j)p_{a_{mP}}(x_j)u(x_j)|D_j|,
\end{split}\ee where $|x-x_j|>a_{mP}$, $j=1,2,\hdots,M$, $|D_j|=1/(mP)^3$ is the volume of the cube $D_j$, and \be\label{e121}
p_{a_{mP}}(x):=4\pi h(x)N(x)(\gamma mPa^{(2-\kappa)/3})^3,\quad
N(x)=1/\gamma^3. \ee  Since $(\gamma mPa^{(2-\kappa)/3})^3\to 1$ as
$m\to \infty$, it follows that \eqref{e120} is a Riemannian sum for
the integral $\int_D g(x,y)p(y)u(y)dy,$ where $p(x)=4\pi h(x)N(x)$.
 This motivates us to define the following approximation of the refraction coefficient $n^2(x)$:
\be\label{e122} n_{a_{mP}}^2(x):=n_0^2(x)-k^{-2}p_{a_{mP}}(x), \ee
where $p_{a_{mP}}$ is defined in \eqref{e121}. We are interested in
finding the largest radius $a_{mP}$ (or the smallest $M=(mP)^3$)
such that \be\label{e123} e(M):=\max_{1\leq l\leq
M}|n^2(x_l)-n^2_{a_{mP}}(x_l)|\leq \epsilon/k^2:=\epsilon(k), \ee
where $k$ is the wave number, $\epsilon>0$ is a given small quantity
and $n_{a_{mP}}^2(x)$ is defined in \eqref{e122}.

An estimate of the error $e(M)$, defined in \eqref{e123}, is given
in the following theorem.

\begin{thm}\label{thm36}
Suppose Assumption A) holds and $N(x)=1/\gamma^3$. Then
\be\label{e124}
 \max_{1\leq l\leq (mP)^3}|n^2(x_l)-n^2_{a_{mP}}(x_l)|\leq k^{-2}\|p\|_\infty\left|1-(\gamma mPa_{mP}^{(2-\kappa)/3})^3\right|,
 \ee where $x_l$ is the center of the $l$-th small ball, $p(x)$ is
 defined in \eqref{e41}, $n^2(x)=n_0^2(x)-k^{-2}p(x),$  and $n^2_{a_{mP}}$ is defined in
\eqref{e122}. Consequently,
 \be\label{e125}
 \lim_{m\to \infty} \max_{1\leq l\leq (mP)^3}|n^2(x_l)-n_{a_{mP}}^2(x_l)|=0.
 \ee
\end{thm}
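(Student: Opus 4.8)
The plan is to reduce \eqref{e124} to an exact algebraic identity, since both $n^2$ and $n^2_{a_{mP}}$ are constructed from the same background coefficient $n_0^2$, merely shifted by constant multiples of $p$. First I would subtract the two definitions directly: because $n^2(x)=n_0^2(x)-k^{-2}p(x)$ and $n^2_{a_{mP}}(x)=n_0^2(x)-k^{-2}p_{a_{mP}}(x)$ by \eqref{e122}, the $n_0^2$ terms cancel, leaving
\bee
n^2(x_l)-n^2_{a_{mP}}(x_l)=k^{-2}\left(p_{a_{mP}}(x_l)-p(x_l)\right).
\eee

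Next I would exploit the explicit proportionality between $p_{a_{mP}}$ and $p$. From \eqref{e121} together with \eqref{e42}, one has $p_{a_{mP}}(x)=4\pi h(x)N(x)(\gamma mPa^{(2-\kappa)/3})^3=p(x)(\gamma mPa^{(2-\kappa)/3})^3$, so that
\bee
p_{a_{mP}}(x_l)-p(x_l)=p(x_l)\left[(\gamma mPa^{(2-\kappa)/3})^3-1\right].
\eee
Taking absolute values, bounding $|p(x_l)|\leq \|p\|_\infty$ uniformly in $l$, and maximizing over $1\leq l\leq (mP)^3$ then yields \eqref{e124} at once.

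For the limiting statement \eqref{e125} I would rewrite the scalar factor using $M=(mP)^3$. Since $(a^{(2-\kappa)/3})^3=a^{2-\kappa}$, one has $(\gamma mPa_{mP}^{(2-\kappa)/3})^3=\gamma^3(mP)^3a_{mP}^{2-\kappa}=\gamma^3 Ma_{mP}^{2-\kappa}$. By \lemref{lem21}, relation \eqref{e28}, Assumption A) forces $Ma_{mP}^{2-\kappa}\to 1/\gamma^3$ as $m\to\infty$, hence $\gamma^3 Ma_{mP}^{2-\kappa}\to 1$ and the right-hand side of \eqref{e124} tends to zero.

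I expect no serious obstacle here: the result is an exact identity followed by a single application of an already-established limit. The only points requiring care are the algebraic bookkeeping in the factor $(\gamma mPa_{mP}^{(2-\kappa)/3})^3=\gamma^3 Ma_{mP}^{2-\kappa}$ and the correct invocation of \lemref{lem21} for the convergence. The continuity hypotheses on $p$ play no role beyond ensuring $\|p\|_\infty<\infty$, so the proof is considerably shorter than those of \lemref{lem32} and \lemref{lem33}.
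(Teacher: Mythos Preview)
Your proof is correct and follows essentially the same route as the paper: subtract the definitions so that $n_0^2$ cancels, factor out $p(x_l)$ to obtain $k^{-2}|p(x_l)|\,|1-(\gamma mPa_{mP}^{(2-\kappa)/3})^3|$, and bound by $\|p\|_\infty$. For \eqref{e125} the paper invokes the asymptotic formula \eqref{e22} rather than \lemref{lem21}, but since \eqref{e28} is itself derived from \eqref{e22} the two justifications are equivalent.
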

\begin{proof}
Let \bee I_l:=|n(x_l)-n^2_{a_{mP}}(x_l)|.\eee Then
\be\label{e126}\begin{split}
 I_l&= k^{-2}|p(x_l)-p(x_l)[\gamma mPa_{mP}^{(2-\kappa)/3}]^3|\leq
k^{-2}|p(x_l)||1-[\gamma mPa_{mP}^{(2-\kappa)/3}]^3|\\
&\leq k^{-2}\|p\|_\infty|1-[\gamma mPa_{mP}^{(2-\kappa)/3}]^3|.
\end{split}\ee
 This together with relation \eqref{e22} yield \eqref{e125}.\\
 \thmref{thm36} is proved.

\end{proof}

Using  \thmref{thm36} one can calculate the smallest $M$ satisfying
\eqref{e123} by the following algorithm:\\

\noindent \textbf{Algorithm}\\
\noindent {\bf Initializations}: Let the wave number $k$, the
constant $\epsilon>0$, $n_0^2(x)$ and $n^2(x)$ be given. Fix $P>1$,
$m=m_0:=1$, $\kappa\in(0,1)$, $\gamma > [1/(2P)]^{(\kappa+1)/3}$ and
$N(x)=1/\gamma^3$. Partition $D$ into $P^3$ cubes $\Omega_q$,
$D=\cup_{q=1}^{P^3}\overline{\Omega_q}$,
$\Omega_j\cap\Omega_i=\emptyset$ for $j\neq i$, where each cube
$\Omega_j$ has side length $1/P$.
\begin{enumerate}

\item[\textbf{Step 1.}] Solve the equation \be\label{e127}
\gamma a_{mP}^{(2-\kappa)/3}+a_{mP}-1/(mP)=0\ee for $a_{mP}.$
\item[\textbf{Step 2.}] Embed $m^3$ small balls of radius $a_{mP}$ in each
cube $\Omega_q$ so that Assumption A) holds.
\item[\textbf{Step 3.}] Compute
\bee p(x_l)=k^2(n^2_0(x_l)-n^2(x_l))\eee and \bee
p_{a_{mP}}(x_l)=p(x_l)[\gamma m P
a_{mP}^{(2-\kappa)/3}]^3,\qquad l=1,2,\hdots,(mP)^3, \eee where
$x_l$ is the center of the $l$-th small ball and $k$ is the wave
number.
\item[\textbf{Step 4.}] If $\max_{1\leq l\leq (mP)^3}|p(x_l)-p_{a_{mP}}(x_l)|>
\epsilon $, then set $m=m+1$ and go to \textbf{Step 1}.
Otherwise the number $M=(mP)^3$ is the smallest number of the
balls embedded in $D$ such that inequality \eqref{e123} holds,
and $a_{mP}$ is the radius of each embedded ball.
\end{enumerate}

\section{Numerical experiments}
In this section we give the results of the  numerical experiments.
Suppose the refraction coefficient of the original material in $D$
is $n_0^2(x)=1$ and the desired refraction coefficients are:
\begin{enumerate}
\item[Example 1.] $n^2(x)=5$,
\item[Example 2.]
$n^2(x)=5+\exp(-|x-x_0|^2/(2\sigma^2))/(\sqrt{2\pi}\sigma), $
where $x_0=(0.5,0.5,0.5)$ and $\sigma=\frac{\sqrt{3}}{2b P}$.
Here $b$ is the smallest number $m$ taken from Example 1.
\item[Example 3.] $n^2(x)=1+0.5\sin(x_1)$, where $x_1$ is the first
element of
the vector $x$,
\item[Example 4.] $n^2(x)=1+0.5\sin(100 x_1)$, where $x_1$ is the first
element of
the vector $x$. \end{enumerate}

By the recipe we choose \be h(x)=k^2[n_0^2(x)-n^2(x)]/(4\pi
N(x))=\gamma^3k^2(n_0^2(x)-n^2(x))/(4\pi ) ,\quad k>0. \ee Let us
take \be P=11,\quad \kappa=0.99,\quad
\gamma=10k[1/(2P)]^{(1+\kappa)/3},\quad m_0=1, \ee where $k\geq 1$
is the wave number and $m_0$ is the initial number of small balls
described in the algorithm. Here the parameters $P=11$ and $m_0=1$
are chosen so that the approximation error in \lemref{lem32} is at
most $c(k)10^{-4}$, where $c(k)$ is a constant depending on the wave
number $k$. We apply the algorithm given in Section 3 to get the
minimal total number of small balls embedded in the cube $D$ such
that inequality \eqref{e123} holds for various values of $\epsilon$,
where the quantity $\epsilon$ was defined in the Algorithm (see the
Initialization and Step 4 of the Algorithm).

The smallest number of the balls embedded in $D$ increases as
$\epsilon$ decreases.  The radius $a_{mP}$ and the
ratio $a_{mP}/d_{mP}$ decrease as $M$ increases, which agrees with
the theory. The results are shown in tables 1-4. In these tables we
define \be d_{mP}:=\min_{1\leq i,j \leq M, i\neq
j}\text{dist}(B_{a_{mP}}(x_i),B_{a_{mP}} (x_j)),\ee where $B_a(x)$
is defined in \eqref{e19}, and \be E:=\max_{1\leq l\leq
M}|n^2(x_l)-n_{a_{mP}}^2(x_l)|, \ee where $M$ is the smallest total
number of small balls embedded in the domain $D$, $a_{mP}$ is the
radius of the embedded small balls and $x_l$ is the center of the $l$-th
small ball. In Example 1 we choose a constant refraction coefficient
$n^2(x)$. For $k=1$ the total number of small balls $M$ increases by
$1.651\times 10^{5}$ when the error level $\epsilon$ is decreased by
$50\%$, while for $k=5$ the value of $M$ increases by $3.4609\times
10^{4}$ as the error level $\epsilon$ decreases by $50\%$, as shown
in Table 1.

In Example 2 we add a Gaussian function to the constant refraction
coefficient $n^2(x)$ considered in Example 1. For $k=1$ the value of $M$
increases significantly as the error level $\epsilon$ decreases by
$50\%$.
\begin{table}[htp]
\newcommand{\m}{\hphantom{$-$}}
\renewcommand{\tabcolsep}{.85pc} 
\renewcommand{\arraystretch}{1.2} 
\scriptsize{
\begin{tabular}{cccccc}
\hline
&\multicolumn{5}{c}{$k=1$}\\
\hline
$\epsilon$&$m$&$ M$&$ a_{mP}$&$  a_{mP}/d_{mP}$& $E$\\
\hline

$   5.000\times 10^{-1} $& $1$& $  1.331\times 10^{3}$& $  3.722\times 10^{-4}$ &$5.445\times 10^{-2}$&$  9.747\times 10^{-2}$ \\
$   5.000\times 10^{-3} $& $5$& $  1.664\times 10^{5}$& $  3.198\times 10^{-6}$ &$  1.098\times 10^{-2}$&$  4.219\times 10^{-3}$ \\

\hline \hline
&\multicolumn{5}{c}{$k=5$}\\
\hline
$\epsilon$&$m$&$ M$&$ a_{mP}$&$  a_{mP}/d_{mP}$& $E$\\
\hline

$   5.000\times 10^{-1} $& $1$& $  1.331\times 10^{3}$& $  3.200\times 10^{-6}$ &$  2.196\times 10^{-3}$&$  8.448\times 10^{-4}$ \\
$   5.000\times 10^{-3} $& $3$& $  3.594\times 10^{4}$& $  1.225\times 10^{-7}$ &$  7.320\times 10^{-4}$&$  9.701\times 10^{-5}$ \\
\hline \hline
\end{tabular}}
\caption{ Example 1 }
\end{table}
\begin{table}[htp]
\newcommand{\m}{\hphantom{$-$}}
\renewcommand{\tabcolsep}{.85pc} 
\renewcommand{\arraystretch}{1.2} 
\scriptsize{
\begin{tabular}{cccccc}
\hline
&\multicolumn{5}{c}{$k=1$}\\
\hline
$\epsilon$&$m$&$ M$&$ a_{mP}$&$  a_{mP}/d_{mP}$& $E$\\
\hline

$   5.000\times 10^{-1} $& $1$& $  1.331\times 10^{3}$& $ 3.722\times 10^{-4}$ &$  5.445\times 10^{-2}$&$  2.209\times 10^{-1}$ \\
$   5.000\times 10^{-3} $& $13$& $  2.924\times 10^{6}$& $  1.873\times 10^{-7}$ &$  4.223\times 10^{-3}$&$  4.715\times 10^{-3}$ \\
\hline \hline
&\multicolumn{5}{c}{$k=5$}\\
\hline
$\epsilon$&$m$&$ M$&$ a_{mP}$&$  a_{mP}/d_{mP}$& $E$\\
\hline

$   5.000\times 10^{-1} $& $1$& $  1.331\times 10^{3}$& $  3.200\times 10^{-6}$ &$  2.196\times 10^{-3}$&$  1.915\times 10^{-3}$ \\
$   5.000\times 10^{-3} $& $5$& $  1.664\times 10^{5}$& $  2.686\times 10^{-8}$ &$  4.392\times 10^{-4}$&$  1.702\times 10^{-4}$ \\
\hline \hline
\end{tabular}}
\caption{ Example 2 }
\end{table}

The refraction coefficients $n^2(x)$ considered in Examples 3 and 4 are
periodic. In Example 3 for $k=1$ the total number of the embedded
small particles $M$ increases by $9319$ as the error level
$\epsilon$ decreases by $50\%$. A similar increment of $M$ is
obtained for $k=5$. These results are shown in Table 3.

In Example 4 the angular frequency of the sine function is $100$
times the angular frequency of the sine function given in Example 3.
In this case we get significant increments of the value of $M$ as
the error level $\epsilon$ decreases by $50\%$ for the wave numbers
$k=1$ and $5$, see  Table 4.

\begin{table}[htp]
\newcommand{\m}{\hphantom{$-$}}
\renewcommand{\tabcolsep}{.85pc} 
\renewcommand{\arraystretch}{1.2} 
\scriptsize{
\begin{tabular}{cccccc}
\hline
&\multicolumn{5}{c}{$k=1$}\\
\hline
$\epsilon$&$m$&$ M$&$ a_{mP}$&$  a_{mP}/d_{mP}$& $E$\\
\hline
$   5.000\times 10^{-1} $& $1$& $  1.331\times 10^{3}$& $  3.722\times 10^{-4}$ &$  5.445\times 10^{-2}$&$  5.536\times 10^{-4}$ \\
$   5.000\times 10^{-4} $& $2$& $  1.065\times 10^{4}$& $  4.837\times 10^{-5}$ &$  2.739\times 10^{-2}$&$  7.239\times 10^{-5}$ \\
\hline \hline
&\multicolumn{5}{c}{$k=5$}\\
\hline
$\epsilon$&$m$&$ M$&$ a_{mP}$&$  a_{mP}/d_{mP}$& $E$\\
\hline
$   5.000\times 10^{-1} $& $1$& $  1.331\times 10^{3}$& $  3.200\times 10^{-6}$ &$  2.196\times 10^{-3}$&$ 4.798\times 10^{-6}$ \\
$   5.000\times 10^{-5} $& $2$& $ 1.065\times 10^{4}$& $  4.084\times 10^{-7}$ &$  1.098\times 10^{-3}$&$  6.126\times 10^{-7}$ \\
\hline \hline
\end{tabular}}
\caption{ Example 3 }
\end{table}

\begin{table}[htp]
\newcommand{\m}{\hphantom{$-$}}
\renewcommand{\tabcolsep}{.85pc} 
\renewcommand{\arraystretch}{1.2} 
\scriptsize{
\begin{tabular}{cccccc}
\hline
&\multicolumn{5}{c}{$k=1$}\\
\hline
$\epsilon$&$m$&$ M$&$ a_{mP}$&$  a_{mP}/d_{mP}$& $E$\\
\hline
$   5.000\times 10^{-1} $& $1$& $  1.331\times 10^{3}$& $  3.722\times 10^{-4}$ &$  5.445\times 10^{-2}$&$  1.218\times 10^{-2}$ \\
$   5.000\times 10^{-4} $& $6$& $  2.875\times 10^{5}$& $  1.861\times 10^{-6}$ &$  9.147\times 10^{-3}$&$  3.673\times 10^{-4}$ \\
\hline \hline
&\multicolumn{5}{c}{$k=5$}\\
\hline
$\epsilon$&$m$&$ M$&$ a_{mP}$&$  a_{mP}/d_{mP}$& $E$\\
\hline
$   5.000\times 10^{-1} $& $1$& $ 1.331\times 10^{3}$& $  3.200\times 10^{-6}$ &$  2.196\times 10^{-3}$&$  1.05\times 10^{-4}$ \\
$   5.000\times 10^{-5} $& $8$& $  6.815\times 10^{5}$& $  6.650\times 10^{-9}$ &$  2.745\times 10^{-4}$&$  1.756\times 10^{-6}$ \\
\hline \hline
\end{tabular}}
\caption{ Example 4 }
\end{table}


\begin{thebibliography}{00}
\bibitem{RAMM1}
A.G. Ramm, Many body wave scattering by small bodies and
applications, J. Math. Phys., 48, N10, 103511, (2007).

\bibitem{RAMM2}
A.G. Ramm, Material with the desired refraction coefficients can be
made by embedding small particles, Phys. Lett. A, 370, N5-6, (2007),
522-527.

\bibitem{RAMM3}
A.G. Ramm, A recipe for making materials with negative refraction in
acoustic, Phys. Lett. A, 372/13, (2008), 2319-2321.

\bibitem{RAMM4}
A.G. Ramm, Wave scattering by many small particles embedded in a
medium, Phys. Lett. A, 372/17, (2008), 3064-3070.
\bibitem{RAMM563}
Ramm, A. G., A collocation method for solving integral equations,
Internat. Journ. of Comput. Sci. and Math., 2(3), (2009)
222-228.
\end{thebibliography}
\end{document}